\theoremstyle{plain}
\newtheorem{thm}{Theorem}[section]
\newtheorem{lemma}[thm]{Lemma}
\theoremstyle{definition}
\newtheorem{example}[thm]{Example}
\numberwithin{equation}{section}
\newcommand{\sB}{{\mathcal B}}
\newcommand{\sR}{{\mathcal R}}
\newcommand{\PP}{\ensuremath{\mathbb{P}}}
\newcommand{\ZZ}{\ensuremath{\mathbb{Z}}}
\newcommand{\hol}{\ensuremath{\mathcal{O}}}
\newcommand\Lam{\Lambda}
\newcommand\Ga{\Gamma}
\newcommand\ga{\gamma}
\newcommand\de{\delta}
\newcommand{\ra}{\ensuremath{\rightarrow}}
\def\eea{\end{eqnarray*}}
\def\bea{\begin{eqnarray*}}
\newcommand\dual{\mathrel{\raise3pt\hbox{$\underline{\mathrm{\thinspace d
\thinspace}}$}}}
\newcommand\qe{\ifhmode\unskip\nobreak\fi\quad $\Box$}       
\def\BOX{\hfill\lower.5\baselineskip\hbox{$\Box$}}
\newtheorem{theo}{Theorem}[section]
\newtheorem{remarkk}[theo]{Remark}
\newenvironment{rem}{\begin{remarkk}\rm}{\end{remarkk}}
\newtheorem{conj}[theo]{Conjecture}
\newcommand{\core}[1]{core(#1)}      
\newenvironment{dedication}
        {\begin{quotation}\begin{center}\begin{em}}
        {\par\end{em}\end{center}\end{quotation}}
\title [Canonical Maps of  Hypersurfaces in Abelian Varieties ]{Canonical Maps of general     Hypersurfaces in Abelian Varieties}
\author{Fabrizio Catanese 
and Luca Cesarano
}
\address{Lehrstuhl Mathematik VIII, Mathematisches Institut der Universit\"{a}t
Bayreuth, NW II, Universit\"{a}tsstr. 30,
95447 Bayreuth}
\email{Fabrizio.Catanese@uni-bayreuth.de}
\email{Luca.Cesarano@uni-bayreuth.de}
\thanks{AMS Classification: 14E05, 14E25, 14M99, 14K25, 14K99, 14H40, 32J25, 32Q55, 32H04.\\ 
Key words: Hypersurfaces, Abelian varieties, Canonical maps,  Gauss maps, Theta divisors, Automorphisms of a covering, Monodromy groups, Generic coverings.\\
The present work took place in the framework  of the 
 ERC Advanced grant n. 340258, `TADMICAMT'  }
\date{\today}
\begin{document}

\begin{abstract}
The   main theorem of this paper  is that,   for a general  pair $(A,X)$ of an (ample) hypersurface $X$ in an Abelian Variety $A$,
the canonical map  $\Phi_X$ of $X$   is birational onto its image if the polarization given by $X$ is not principal (i.e., its Pfaffian $d$ is not equal to $1$). 

We also easily show that, setting $g = dim (A)$, and letting $d$ be the Pfaffian of the polarization given by $X$, then if $X$ is smooth
and $$\Phi_X : X \ra \PP^{N:=g+d-2}$$ is an embedding, then necessarily  we have the inequality
$ d \geq g + 1$, equivalent to $N : = g+d-2 \geq 2 \ dim(X) + 1.$

Hence we   formulate the following interesting  conjecture, motivated by work of the second author:  if $ d \geq g + 1,$ then, for a general  pair $(A,X)$,  $\Phi_X$ is an embedding.

\end{abstract}

\maketitle
\begin{dedication}
Dedicated to Olivier Debarre on the occasion of his 60-th  $+ \epsilon$
 birthday.
\end{dedication}



\section {Introduction}

Let $A$ be an Abelian variety of dimension $g$, and let $X \subset A$ be a smooth ample hypersurface in $A$ such that the Chern class $c_1(X)$ of
the divisor $X$ is a polarization of type $\overline{d} : = (d_1, d_2, \dots, d_{g})$, so that the vector space $H^0(A, \hol_A(X))$
has dimension equal to the Pfaffian $ d : = d_1 \cdot \dots \cdot d_g$ of $c_1(X)$. 

The classical results of Lefschetz \cite{lefschetz} say that the rational map associated to $H^0(A, \hol_A(X))$ is a morphism if $d_1\geq 2$, and is an embedding of $A$ if $d_1 \geq 3$. 

There have been several improvements in this direction, by work of several authors, for instance \cite{ohbuchi} showed that for $d_1\geq 2$
we have an embedding except in a very special situation, and, for  progress in the case $d_1 =1$, see for instance \cite{ramanan}, \cite{debarreetal}.

Now, by adjunction, the canonical sheaf  of $X$ is the restriction $\hol_X(X)$, so a natural generalization of Lefschetz' theorems
is to ask about the behaviour of the canonical systems of such hypersurfaces $X$. Such a question is important in birational geometry, but  the results  for the canonical maps can depend on the hypersurface $X$ and not just  on  the polarization type only. 
 
We succeed in this paper to find (respectively: conjecture) simple results
for general such hypersurfaces.

Our work  was motivated by a theorem  obtained by the first author in a joint work with Schreyer \cite{c-s} on canonical surfaces:
if we have a polarization of type $(1,1,2)$ then the image $\Sigma$ of the canonical map $\Phi_X$ is in general a surface of degree $12$ in $\PP^3$,
birational to $X$, while for the special case where $X$ is the pull-back of the Theta divisor of 
a curve of genus $3$, then the canonical map has degree $2$, and $\Sigma$ has degree $6$.

The connection of the  above  result with the Lefschetz theorems is, as we already said,  provided by adjunction, we have 
 the following folklore result, a proof of which can be found for instance  in  \cite{cesarano}
 (a referee pointed out that the proof in the case of a principal polarization appears in 2.10 of \cite{green},
 and that of course Green's  proof works in general)

\begin{lemma}\label{canmap}
Let $X$ be a smooth ample hypersurface of dimension $n$ in an Abelian variety $A$, such that the class of $X$ is
a polarization of type  $\overline{d} : = (d_1, d_2, \dots, d_{n+1})$.

Let $\theta_1, \dots, \theta_d$ be a basis of $H^0(A, \hol_A(X))$ such that $X = \{ \theta_1=0\}$. 

If $z_1, \dots , z_g$ are linear  coordinates on the complex vector space $V$ such that $A$ is the quotient of
$V$ by a lattice $\Lam$, $A = V/ \Lam$, then the  canonical map $\Phi_X$ is given by 
$$(\theta_2, \dots, \theta_d,\frac{ \partial \theta_1}{ \partial z_1},\dots,  \frac{ \partial \theta_1}{ \partial z_g}).$$ 
\end{lemma}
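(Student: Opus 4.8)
The statement to prove is Lemma \ref{canmap}: that the canonical map of a smooth ample hypersurface $X \subset A$ is given in the indicated coordinates by the $d-1$ theta functions $\theta_2, \dots, \theta_d$ together with the $g$ partial derivatives $\partial\theta_1/\partial z_1, \dots, \partial\theta_1/\partial z_g$. The plan is to identify the canonical sheaf $\omega_X$ explicitly via adjunction and then produce a basis of $H^0(X, \omega_X)$ from the given data.

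\emph{Step 1: Adjunction.} By the adjunction formula, $\omega_X = (\omega_A \otimes \hol_A(X))|_X$. Since $A$ is an abelian variety, $\omega_A \cong \hol_A$ is trivial (canonically, via translation-invariant forms; concretely $dz_1 \wedge \cdots \wedge dz_g$ trivializes it). Hence $\omega_X \cong \hol_A(X)|_X = \hol_X(X)$, the normal bundle of $X$ in $A$. So I must exhibit $H^0(X, \hol_X(X))$ concretely.

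\emph{Step 2: The conormal/normal exact sequence and the residue/Poincaré–adjunction map.} Consider the standard short exact sequence on $A$:
\[
0 \to \hol_A \to \hol_A(X) \to \hol_X(X) \to 0,
\]
the last arrow being restriction of sections. Passing to cohomology, $H^0(A,\hol_A(X)) \to H^0(X,\hol_X(X))$ has kernel $H^0(A,\hol_A) = \C\cdot\theta_1$ (the constants times the defining section), and the next term is $H^1(A,\hol_A)$, which is nonzero of dimension $g$. So the restrictions $\theta_2|_X, \dots, \theta_d|_X$ give $d-1$ independent sections of $\omega_X$, but they do not span: we are missing a $g$-dimensional piece coming from $H^1(A,\hol_A) \cong \C^g$. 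The remaining sections of $\omega_X$ are obtained by a coboundary/residue construction: near $X$, a section of $\omega_X$ is represented by a form $\eta/\theta_1$ with $\eta$ a holomorphic $g$-form on a neighbourhood such that $\eta/\theta_1$ has at worst a simple pole along $X$; equivalently, writing $\omega_A$-trivialization $dz_1\wedge\cdots\wedge dz_g$, the meromorphic forms $\frac{f\, dz_1 \wedge \cdots \wedge dz_g}{\theta_1}$ with $f \in H^0(A,\hol_A(X))$ Poincaré-restrict to sections of $\omega_X$, and $f = \partial\theta_1/\partial z_j$ is exactly the combination whose associated class picks out the $j$-th generator of $H^1(A,\hol_A) = H^0(A,\Omega^1_A)^\vee$-dual direction. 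I would make this precise by computing the Poincaré residue: on $X$, in a chart where $z_j$ is transverse, $\mathrm{Res}_X\big(\tfrac{dz_1\wedge\cdots\wedge dz_g}{\theta_1}\big) = \tfrac{(-1)^{j-1}}{\partial\theta_1/\partial z_j}\, dz_1\wedge\cdots\widehat{dz_j}\cdots\wedge dz_g$, so that $\partial\theta_1/\partial z_j$ enters as announced, and smoothness of $X$ guarantees the $\partial\theta_1/\partial z_j$ have no common zero on $X$ so these are genuine sections.

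\emph{Step 3: Count and independence.} Adding up: the restricted thetas contribute $d-1$ and the residual forms contribute $g$, giving $p_g(X) = h^0(\omega_X) = g + d - 1$; this matches $N + 1$ with $N = g+d-2$ as in the abstract, and it also agrees with the computation of $\chi(\omega_X)$ from the exact sequence above (using $h^0(A,\hol_A(X)) = d$, $h^i(A,\hol_A(X))=0$ for $i>0$ by Lefschetz/Kodaira, and $h^i(A,\hol_A) = \binom{g}{i}$). One then checks linear independence of the full system $\{\theta_2|_X,\dots,\theta_d|_X\} \cup \{\partial\theta_1/\partial z_j|_X\}$: a relation among them would, via the exact sequence, force a relation $\sum c_j \partial\theta_1/\partial z_j + (\text{linear combo of }\theta_i)$ to be divisible by $\theta_1$, i.e. to define a section of $\hol_A(X)$ vanishing on $X$, hence a multiple of $\theta_1$; comparing orders of vanishing (or using that the $\partial\theta_1/\partial z_j$ together with $\theta_1$ have no common zero by smoothness) rules this out.

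\emph{Main obstacle.} The genuinely delicate point is Step 2: pinning down precisely \emph{which} $g$-dimensional space of meromorphic $g$-forms with simple pole along $X$ maps isomorphically onto the cokernel $H^1(A,\hol_A)$ of the restriction map, and verifying that the natural choice is spanned by the $\tfrac{\partial\theta_1/\partial z_j}{\theta_1}\,dz_1\wedge\cdots\wedge dz_g$. This is essentially the identification of the connecting homomorphism $H^0(X,\hol_X(X)) \to H^1(A,\hol_A)$ with a residue map, together with the observation that $d\theta_1 = \sum_j \tfrac{\partial\theta_1}{\partial z_j} dz_j$ and that, modulo $\theta_1$, the collection of $\partial\theta_1/\partial z_j$ spans (on $X$) the cotangent directions — a consequence of smoothness of $X$. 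Everything else is bookkeeping with the exact sequence and dimension counts. For the full argument I refer to \cite{cesarano}, and below give the details.
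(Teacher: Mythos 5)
The paper does not actually prove Lemma \ref{canmap}; it quotes it from \cite{cesarano}. So there is no in-paper proof to compare against, and your outline has to stand on its own. Its skeleton is the standard and correct one: adjunction gives $\omega_X\cong\hol_X(X)$; the sequence $0\ra\hol_A\ra\hol_A(X)\ra\hol_X(X)\ra 0$ together with $H^1(A,\hol_A(X))=0$ (ampleness) gives $h^0(\omega_X)=(d-1)+g$; the $\theta_i|_X$, $i\ge 2$, span the image of restriction; and the missing $g$ dimensions are the cokernel $H^1(A,\hol_A)\cong\C^g$, to be accounted for by the $\partial\theta_1/\partial z_j|_X$.

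There are, however, two places where the argument as written would not survive being made precise. First, $\partial\theta_1/\partial z_j$ is \emph{not} an element of $H^0(A,\hol_A(X))$, so the form $(\partial\theta_1/\partial z_j)\,dz_1\wedge\dots\wedge dz_g/\theta_1$ does not descend to $A$ and cannot be fed into a Poincar\'e residue on $A$ as you propose. Differentiating the functional equation $\theta_1(z+\la)=e_\la(z)\theta_1(z)$ gives $\partial_j\theta_1(z+\la)=e_\la(z)\,\partial_j\theta_1(z)+\partial_j e_\la(z)\,\theta_1(z)$: the derivative transforms correctly only modulo $\theta_1$. This computation is exactly the (missing) verification that $\partial_j\theta_1|_X$ is a well-defined section of $\hol_X(X)$, and it is also why these sections sit in the cokernel of restriction rather than its image; the clean finish is to evaluate the connecting map $H^0(X,\hol_X(X))\ra H^1(A,\hol_A)$ on them, whose value is the cocycle $\la\mapsto\partial_j\log e_\la=\pi H(e_j,\la)$ (Appell--Humbert), and these $g$ classes form a basis of $H^1(A,\hol_A)\cong\overline{V}^{\,*}$ precisely because the Hermitian form $H$ is nondegenerate, i.e.\ because $X$ is ample. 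Second, and relatedly, your independence argument (``comparing orders of vanishing'') is not the operative mechanism: if $\sum_j c_j\partial_j\theta_1+\sum_{i\ge 2}a_i\theta_i=h\,\theta_1$, quasi-periodicity only forces $h$ to be affine-linear, and killing the linear part and the $c_j$ again requires nondegeneracy of $H$ --- equivalently, $\sum_j c_j\,\partial_j\theta_1|_X\equiv 0$ would put the Gauss image of $X$ inside a hyperplane, contradicting Ran's theorem \cite{ran} that the Gauss map of an ample hypersurface is generically finite. Both repairs are short, but both genuinely use ampleness, which your sketch never invokes at the decisive moments.
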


Hence first of all the canonical map is an embedding if $H^0(A, \hol_A(X))$ yields an embedding of $A$; secondly, 
since a projection of $\Phi_X$ is the Gauss map of $X$, given by 
$(\frac{ \partial \theta_1}{ \partial z_1},\dots,  \frac{ \partial \theta_1}{ \partial z_g}),$
by a theorem of Ziv Ran \cite{ran} it follows that 
 the canonical system $|K_X|$ is base-point-free and 
$\Phi_X$  a  finite morphism. 
 
 This is our main result:

\begin{thm}\label{genbirat}
Let $(A,X)$ be a general pair, consisting of a hypersurface $X$ of dimension $n = g-1$ in an Abelian variety $A$, such that 
the class of $X$ is
a polarization of type  $\overline{d} : = (d_1, d_2, \dots, d_{g})$ with Pfaffian $ d = d_1\dots d_g > 1$.

Then the canonical map $\Phi_X $ of $X$ is birational onto its image $\Sigma$.
\end{thm}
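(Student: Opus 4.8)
The plan is to reduce the birationality of $\Phi_X$ to a statement about the Gauss map together with the extra coordinates $\theta_2,\dots,\theta_d$, and then to prove genericity by a degeneration/monodromy argument. By Lemma \ref{canmap} the canonical map factors through the product of the Gauss map $\gamma_X = (\partial_{z_1}\theta_1,\dots,\partial_{z_g}\theta_1)\colon X\to \PP^{g-1}$ and the map $\psi_X = (\theta_2,\dots,\theta_d)\colon X\to \PP^{d-2}$; in particular $\Phi_X$ is birational onto its image as soon as the pair $(\gamma_X,\psi_X)$ separates a general pair of points of $X$. The Gauss map is a finite morphism (Ran, as quoted), and its generic fibre is a finite set $F(p)$ of points on which the only thing one needs to understand is how $\psi_X$ acts. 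So the first step is to describe $F(p)$: a general point $q$ of $X$ has $\gamma_X^{-1}(\gamma_X(q))$ equal to the locus where the hyperplane $T_qX$ (a translate of a fixed subspace of $V$) is tangent to $X$, equivalently the set of critical points of the projection of $X$ along a fixed direction; one should compute its cardinality $\deg\gamma_X$ in terms of $g$ and $d$ (it is the relevant Chern-number computation, but I will not carry it out here).

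Next I would set up the parameter space. Fix the Abelian variety $A=V/\Lambda$ with a polarization line bundle $L=\hol_A(X)$ of type $\overline d$; the hypersurfaces in question are parametrized by the projective space $\PP(H^0(A,L))=\PP^{d-1}$, and $(A,X)$ general means $X$ general in this linear system for very general $A$ in the moduli of polarized Abelian varieties of type $\overline d$. For the birationality statement it suffices to treat $A$ fixed (very general) and $X$ general in $\PP^{d-1}$: one exhibits a single $X$ (or a one-parameter subfamily) for which $\Phi_X$ is birational, because birationality of the canonical map is an open condition in the family, by upper semicontinuity of fibre dimension and the openness of the locus where a generically finite map is generically injective. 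The key is then to produce a good test object.

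The technical heart — and the main obstacle — is the monodromy/irreducibility argument that upgrades "some point" to "the general point." Consider the incidence variety $I=\{(X,q,q')\,:\,q\ne q',\ \gamma_X(q)=\gamma_X(q'),\ \theta_j(q)=\theta_j(q')\ \forall j\}$ over $\PP^{d-1}$; birationality fails exactly when $I$ dominates $\PP^{d-1}$. One wants to show $\dim I < d-1$. The natural strategy is: (i) show that the monodromy group of the Gauss covering $\gamma_X$ (as $X$ varies) is the full symmetric group $\mathfrak S_{\deg\gamma_X}$ on the fibre, by a degeneration to a hypersurface with an ordinary node or to a union/translate where the covering visibly has simple branching and transitive monodromy; (ii) deduce that two distinct points $q,q'$ in a general Gauss fibre are "independent," so that requiring $\theta_j(q)=\theta_j(q')$ for $j=2,\dots,d$ imposes $d-1$ independent conditions on $X\in\PP^{d-1}$ once $d>1$, forcing $I$ not to dominate. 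Equivalently, one shows that the restriction map $H^0(A,L)\to \mathbb C_q\oplus\mathbb C_{q'}$ is surjective for a general such pair, which is a statement about $L$ being "$2$-jet/$2$-point" ample along Gauss fibres and follows from Lefschetz-type very-ampleness of $L^{\otimes 2}$-flavoured arguments combined with the freedom to move $q'$ within its fibre via monodromy. The subtle point is that $q$ and $q'$ are not independent points of $A$ — they are constrained to lie on a common Gauss fibre — so one genuinely needs the transitivity/$2$-transitivity of the monodromy to break that correlation; establishing that transitivity (via an explicit degeneration, e.g. pull-backs of theta divisors of hyperelliptic curves, where the Gauss map degenerates in a controlled way as hinted in the introduction) is where the real work lies. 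Once $\dim I<d-1$ is in hand, the general $X$ has $\Phi_X$ injective on a dense open set, hence $\Phi_X$ is birational onto $\Sigma$ since it is already a morphism.
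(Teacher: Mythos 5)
Your plan is a reasonable first instinct, but it has two genuine gaps, and the second one is precisely the hard part of the theorem. First, the incidence-variety criterion is mis-stated: $I$ dominating $\PP^{d-1}$ does not mean birationality fails, and $\dim I < d-1$ is not the right target. Birationality of $\Phi_X$ fails only if the coincidence locus $I_X \subset X\times X$ dominates $X$ under the first projection, i.e.\ has dimension $\geq g-1$; for $g\geq 3$ one fully expects nonempty lower-dimensional coincidence loci even when the theorem holds, so your dimension bound is both too strong and not what you need. Second, and decisively, the claim that the conditions $\theta_j(q)=\theta_j(q')$, $j=2,\dots,d$, cut down the Gauss-fibre correlation by $d-1$ independent conditions is unsupported and cannot be taken for granted: for $d\leq g$ the map $A\dashrightarrow \PP^{d-1}$ given by $|L|$ has positive-dimensional fibres, so the locus of pairs not separated by $H^0(A,L)$ is large, and whether it meets a general Gauss fibre in less than the expected dimension is essentially equivalent to the statement being proved — there is a real circularity risk. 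You candidly defer the monodromy computation for the Gauss covering of a general $X$ ("where the real work lies"), but that computation is not known and is not supplied by a vague degeneration to a nodal hypersurface; without it the argument does not close.

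The paper circumvents exactly these difficulties by a different route. It first reduces, by induction on divisibility of the polarization type together with the eigenspace decomposition of $H^0(K_X)$ under a $\ZZ/p$-\'etale cover (this is where $d>1$ enters structurally), to type $(1,\dots,1,p)$ with $p$ prime. It then specializes $X$ to the pull-back of a Theta divisor under a $\ZZ/p$-isogeny $A\to A'$, where Debarre's theorem provides the needed monodromy input for free: $\Theta/\pm 1\to\PP^{g-1}$ is a generic covering with monodromy group $\mathfrak S_{g!/2}$, irreducible reduced branch divisor, and birational ramification divisor. Group theory of the tower $X\to\Theta/\pm1\to\PP^{g-1}$ (maximality of $\mathfrak S_{N-1}$ in $\mathfrak S_N$, plus a ramification-degree count over the irreducible branch divisor to exclude the intermediate cases) forces any factorization of the Gauss map, hence of $\Phi_X$, to correspond to a subgroup of the dihedral group $D_p$; the two surviving candidates $\Sigma=\Theta$ and $\Sigma=X/\pm1$ are then excluded by the explicit $D_p$-representation on $H^0(K_X)$ and, for $p=2$, by a deformation argument comparing the Kuranishi families of $X$ and of a Theta divisor. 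If you want to pursue your incidence-variety framework, you would still need to import a monodromy statement of comparable strength, and the only currently available source for it is the specialization to Theta divisors.
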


The first observation  is:   the hypothesis that we  take  a general such pair, and not any pair,
is necessary in view of the    cited result of \cite{c-s}.

The second observation is that  the above result extends to more general situations, using a result on openness
of birationality (this will be pursued elsewhere).  This allows another proof of the theorem,  obtained studying pull-backs of Theta divisors of hyperelliptic curves (observe that for Jacobians the Gauss map of the Theta divisor
is a rational map, see \cite{cgs} for a study of its degree).

Here  we use the following 
nice result by Olivier Debarre \cite{debarre}:

 \begin{thm}\label{debarre}
Let $\Theta \subset A'$ be the  Theta divisor of a general principally polarized Abelian variety $A'$: then the Gauss map of $\Theta$,
$\psi : \Theta \ra P : = \PP^{g-1}$ factors through $Y : = \Theta / \pm 1$, and yields a generic covering $\Psi : Y \ra P$,
meaning that 

\begin{enumerate}
\item
The branch divisor $\sB \subset P $ of $\Psi$ is irreducible and reduced;
\item
The ramification divisor of $\Psi$, $\sR \subset Y$, maps birationally to $\sB$;
\item
the local monodromies of the covering at the general points of $\sB$ are transpositions;
\item
the Galois group of $\Psi$ (the global monodromy group of the covering $\Psi$) is the symmetric group $\mathfrak S_N$,
where $ N = (\frac{1}{2} g!)$.

\end{enumerate}
\end{thm}

The next question to which  the previous result paves the way is: when is $\Phi_X$ an embedding for $(A,X)$ general? 

 An elementary application of the  Severi double point formula \cite{Severi} (see also \cite{FL77}, \cite{Ca79}) as an embedding obstruction, yields  a necessary condition  (observe that a similar argument was used by van de Ven in \cite{vdv}, in order to study
the embeddings of Abelian varieties).

 \begin{thm}\label{obstruction}
Let $X\subset A$ be a smooth  ample hypersurface in an Abelian variety of dimension $g$, giving a polarization with Pfaffian $d$.

If the canonical map $\Phi_X$ is an embedding of $X$, then necessarily 
$$ d \geq g + 1.$$

\end{thm}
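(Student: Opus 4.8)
The plan is to use the Severi double point formula as an obstruction to the existence of an embedding $\Phi_X : X \hookrightarrow \PP^N$ with $N = g + d - 2$. Recall that if $Y^n \hookrightarrow \PP^N$ is a smooth projective variety of dimension $n$, the Severi (self-intersection / double point) formula computes the class of the double point locus of a general projection to $\PP^{2n}$, or more precisely relates the Chern classes of $Y$, the hyperplane class $H$, and the normal bundle; when $N = 2n+1$ a general projection to $\PP^{2n}$ is generically injective and its double point class $\mathbb{D}$ is given by
\[
\mathbb{D} = \left( c(\Phi_X^* T_{\PP^N}) / c(T_X) \right)_{n} \cdot [X] - \binom{N+1}{?}\cdots,
\]
but the clean statement I will actually invoke is: for an embedding $Y^n \subset \PP^N$, the "double point cycle" of a general projection $Y \to \PP^{2n}$ has class, pushed to a number, equal to $d_0(d_0 - \text{something}) - \cdots \ge 0$ where $d_0 = \deg Y$; the essential point is that \emph{this class must be effective (indeed nonnegative as a number) when $Y$ embeds}. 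So the first step is to write down precisely the Severi double-point formula in the form $[\mathbb D] = \big( c(N_{Y/\PP^N})^{-1}\big)$-type expression, specialized to $N = g+d-2$ and $n = \dim X = g-1$, i.e. the borderline case $N = 2n + 1$ exactly when $d = g+1$, and $N > 2n+1$ (so no double points forced, formula trivially satisfied) when $d > g+1$, and $N < 2n+1$ when $d < g+1$.

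The second step is to compute the relevant Chern numbers for $X$. Here everything is governed by adjunction: $X \subset A$ with $A$ an abelian variety means $T_A|_X$ is trivial of rank $g$, so from $0 \to T_X \to \hol_X^{\oplus g} \to \hol_X(X) \to 0$ we get $c(T_X) = (1 + x)^{-1}$ where $x := c_1(\hol_X(X)) = K_X$ is the restriction of the polarization class. Moreover $\Phi_X^* \hol_{\PP^N}(1) = \hol_X(K_X)$, so the hyperplane class is exactly $x$. Thus all the Chern classes entering the double point formula are polynomials in the single class $x \in H^2(X)$, and the needed top intersection numbers are $x^{n} \cdot [X]$, which equals the self-intersection $(X|_X)^{g-1} = X^g / $ (by the projection/adjunction) $ = d \cdot g!$ up to the standard normalization — concretely $\deg \Phi_X(X) = K_X^{n} = (X)^g = d\cdot g!$ by Poincaré's formula for the polarization of type $\overline d$. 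So the double point formula becomes a single polynomial inequality in the integers $g$ and $d$.

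The third step is to extract the inequality. Plugging $c(T_X) = (1+x)^{-1}$, $H = x$, $N = g + d - 2$, $n = g-1$ into Severi's formula, the double point class is
\[
\mathbb{D} \;=\; \Big[(1+x)^{g}\Big]_{\text{deg }n}\!\cdot x^{\,0}\;-\;\binom{N+1}{?}\cdots
\]
— after collecting terms one finds $\mathbb D$ expressed as $\big(d\cdot g! \big)\cdot\big(\text{linear in }d, g\big)$ up to combinatorial coefficients; requiring $\deg \mathbb D \ge 0$ (necessary for an embedding, since an embedded $X$ has empty double locus under a general projection to $\PP^{2n}$ only if $\deg\mathbb D \ge 0$, and in the borderline case $N=2n+1$ effectivity still forces nonnegativity) yields precisely $d \ge g+1$. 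I expect the cleanest route is: the projection $X \to \PP^{2n} = \PP^{2g-2}$ from a general $\PP^{N - 2n - 1} = \PP^{d - g}$-center is defined and finite, generically one-to-one iff $X$ embeds in $\PP^N$; its double-point number, by Severi–Fulton–Laksov, is
\[
\deg\mathbb D \;=\; \deg\Phi_X(X)^2 \;-\; \deg\big(c_{\text{top}}(\cdots)\big)
\]
and one checks $\deg \mathbb D < 0$ whenever $d \le g$, contradicting effectivity; hence $d \ge g+1$.

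The main obstacle will be getting the Severi double point formula in exactly the right normalized form and correctly identifying which Chern-class term survives: one must be careful that $\Phi_X$ is an embedding (not merely finite birational) to legitimately use the double-point cycle as an \emph{effective} class, and one must track the combinatorial coefficient $\binom{g}{\bullet}$ multiplying $x^{g-1}$ against the "expected" term coming from $c(T_{\PP^N})$. The bookkeeping is routine but error-prone; the conceptual content is simply that an $(g-1)$-fold cannot embed in a projective space of dimension below $2(g-1)+1 = 2g-1$ unless a positivity inequality — which unwinds to $d \ge g+1$ — holds. I would double-check the final inequality against the known case $\overline d = (1,1,2)$, $g=3$, $d=2$: there $d = 2 < g+1 = 4$, consistent with the fact (from \cite{c-s}) that $\Phi_X$ is \emph{not} an embedding (indeed not even birational for the special Jacobian locus).
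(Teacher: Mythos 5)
You have correctly identified the paper's tool (the Severi--Fulton--Laksov double point formula, with $c(T_X)=(1+x)^{-1}$ from the adjunction sequence, hyperplane class $x=K_X$, and $\deg \Phi_X(X)= X^{g}=d\cdot g!$), but the logical core of your argument is off in a way that kills the proof. You propose to use \emph{nonnegativity} of the double point class of a general projection to $\PP^{2n}$ ($n=g-1$) and assert that $\deg\mathbb D\ge 0$ ``yields precisely $d\ge g+1$''. It does not. If $d\le g$ then $N=g+d-2\le 2n$, so $X$ embeds into $\PP^{2n}$ after a linear embedding $\PP^N\hookrightarrow\PP^{2n}$, and the correct conclusion is that the double point class \emph{vanishes}; this gives the equality
$$ m^2=\deg c_n\bigl(\Phi^*T_{\PP^{2n}}-T_X\bigr)=\Bigl[(1+x)^{2n+2}\Bigr]_n=\binom{2n+2}{n}\,m, \qquad m=d\,(n+1)!,$$
i.e.\ $d\,(n+1)!=\binom{2n+2}{n}$. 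Mere nonnegativity would give only $d\,(n+1)!\ge\binom{2n+2}{n}$, which for $g=3$ reads $6d\ge 15$, i.e.\ $d\ge 3$ --- strictly weaker than the claimed $d\ge 4$. So the inequality you plan to extract is not strong enough, and the ``positivity'' heuristic (``an $n$-fold cannot embed below $\PP^{2n+1}$ unless\dots'') is also not a theorem: the obstruction to embedding in $\PP^{\le 2n}$ is exactly the vanishing, not the sign, of $\mathbb D$.

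Even granting the equality, you are not done: one Diophantine equation must still be shown to have no solutions with $d\le g$, and this is where the paper does real work that your sketch omits. The paper applies the double point formula a \emph{second} time, to a general linear section of $\Phi_X(X)$ of dimension and codimension $d-1$ in $\PP^{2d-2}$, obtaining $d\,(n+1)!=\binom{n+d+1}{d-1}$; comparing the two binomial coefficients forces $d=n+1$, and then the identity $(n+2)!=\binom{2n+2}{n+1}$ fails for all $n\ge 2$ by an elementary growth comparison. Finally, the surviving case $n=1$, $d=2$ is a genuine exception --- a non-hyperelliptic genus-$3$ curve in a $(1,2)$-polarized abelian surface is canonically embedded as a plane quartic with $d=2<g+1=3$ --- which is why the theorem as actually proved in the body of the paper carries an explicit hypothesis excluding it; your consistency check at type $(1,1,2)$ does not detect this, and a proof along your lines would necessarily get stuck there rather than reach a contradiction.
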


With some optimism  (hoping  for  a simple result), but relying on the highly non-trivial positive result of the second author 
\cite{cesarano} concerning polarizations of type $(1,2,2)$
(here $g=3$, $d=4$), and on the result by Debarre and others  \cite{debarreetal} that this holds true for
polarizations of type $(1,\dots, 1, d)$ with $ d > 2^g$,  we pose the following conjecture:

\begin{conj}\label{main}
Assume that $(A,X)$ is a general pair of  a smooth  ample hypersurface $X\subset A$  in an Abelian variety $A$, giving a polarization with Pfaffian $d$
satisfying $ d \geq g + 1.$

Then  the canonical map $\Phi_X$ is an embedding of $X$.

\end{conj}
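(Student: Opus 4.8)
Since $X$ is of general type with $\omega_X=\hol_X(X)$ ample and, by standard vanishing, $h^0(X,\omega_X)=d+g-1$ is deformation invariant, the set of pairs $(A,X)$ of polarization type $\overline d$ for which $\Phi_X$ is a closed immersion into the fixed projective space $\PP^{N}$, $N=g+d-2$, is Zariski open; hence it suffices to exhibit one good pair of each admissible type. By Lemma \ref{canmap} we may write $\Phi_X=(\phi|_X,\gamma_X)$, where $\phi\colon A\to\PP^{d-1}$ is the morphism attached to $|\hol_A(X)|$ (so $\phi(X)$ lies in the hyperplane $\{\theta_1=0\}$ and $\phi|_X=(\theta_2:\dots:\theta_d)$) and $\gamma_X=(\partial\theta_1/\partial z_1:\dots:\partial\theta_1/\partial z_g)$ is the Gauss map. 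If $\phi$ is already an embedding of $A$ --- which by Lefschetz holds for $d_1\geq 3$ and by Ohbuchi for $d_1\geq 2$ outside an explicit list --- then $\phi|_X$ is a closed immersion of $X$, and adjoining the Gauss coordinates preserves this (a morphism to $\PP^N$ whose composition with a suitable linear projection is a closed immersion is itself one), so $\Phi_X$ is an embedding. Thus the real content of the conjecture lies in the types with small $d_1$, first of all $(1,\dots,1,d)$ with $d\geq g+1$, where $\phi$ is not injective and the Gauss coordinates must play an essential role.

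The natural strategy is to read Conjecture \ref{main} as the statement that, for a general pair, the Severi double point obstruction of Theorem \ref{obstruction} is the \emph{only} obstruction to embedding. Concretely, since $|K_X|$ is base-point-free by Ran's theorem and $\Phi_X$ is birational onto $\Sigma$ by the birationality theorem stated above, the failure of $\Phi_X$ to be an embedding is controlled by the off-diagonal parallel locus
$P_X:=\overline{\{(x,y)\in X\times X : x\neq y,\ T_xX\parallel T_yX\}}$
and its infinitesimal analogue (length-two subschemes of $X$ lying in a fibre of $\gamma_X$): $\Phi_X$ is an embedding if and only if $\phi|_X$ separates all of these, that is, $\phi(x)\neq\phi(y)$ for $(x,y)\in P_X$ and likewise on tangent data. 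Now $P_X$ is a component of the fibre product $X\times_{\PP^{g-1}}X$, so for a general $X$ (for which $\gamma_X$ is generically finite of degree $g!\,d$, since $\gamma_X^{*}\Oh(1)=\hol_X(X)$ and $(\hol_X(X))^{g-1}=(X^{g})=g!\,d$) one has $\dim P_X\leq g-1=\dim X$. The plan is to take the test pair as a controlled degeneration of a general pair toward a pull-back of the theta divisor of a general principally polarized abelian variety, so that Debarre's Theorem \ref{genbirat} governs the Gauss factor: its full symmetric monodromy forces the fibres of the Gauss map to be in general position, after which one must check that the remaining $d-1$ coordinates coming from $|\hol_A(X)|$ separate such general-position fibres, points and tangent vectors alike. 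The hypothesis $d\geq g+1$, equivalently $N\geq 2\dim X+1$, should be precisely what makes the relevant secant and tangent incidence varieties --- cut out against the $(g-1)$-dimensional locus $P_X$ and its tangent analogue --- have the expected dimension, in agreement with the necessity of $d\geq g+1$ already established in Theorem \ref{obstruction}.

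The main obstacle I expect is turning this expected-dimension heuristic into a proof at the critical value $d=g+1$, where $N=2\dim X+1$ and there is no margin --- exactly the regime in which classical generic-projection arguments into $\PP^{2n+1}$ are delicate. Three points need care: (i) making the degeneration to a pull-back of a theta divisor precise and verifying that the openness of ``embedding'' and the relevant first-order behaviour of $\Phi_X$ survive it, since the pull-back itself is highly non-general; (ii) upgrading Debarre's generic-covering statement, proved for the principal case $d=1$, to the Gauss map of a general hypersurface with $d\geq 2$ (no longer $\pm1$-invariant, of degree $g!\,d$); and (iii) replacing ``a generic family of $d-1$ linear forms separates fibres in general position'' by the same assertion for the specific, non-generic coordinate systems furnished by $|\hol_A(X)|$, together with the tangent-vector refinement required for an immersion. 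For $d\geq g+2$ one has $N\geq 2\dim X+2$, and step (iii) becomes a genuine generic-projection argument with room to spare, so I would expect the conjecture to be accessible first in that range, and only afterwards --- by a finer transversality analysis or by producing a single completely explicit pair --- in the boundary case $d=g+1$.
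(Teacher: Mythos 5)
The statement you are addressing is Conjecture \ref{main}: the paper does not prove it and explicitly leaves it open (even the first nontrivial surface case, type $(1,1,4)$, is described as not yet written down). What the paper does offer around the conjecture is the necessary condition $d \geq g+1$ (Theorem \ref{obstruction}), the birationality theorem for general pairs, and a partial reduction lemma: if the conjecture holds for a type $\overline{\de}$ dividing $\overline{d}$ and the general $|X'|$ of type $\overline{\de}$ is base point free, then it holds for $\overline{d}$. Your opening reduction (dispose of $d_1\geq 2$ or $3$ via Lefschetz--Ohbuchi, so the content is concentrated in types like $(1,\dots,1,d)$) is in the same spirit but is not that lemma. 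So there is no proof in the paper to compare yours against; the only question is whether your proposal closes the gap, and it does not.

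The concrete gap is this. Your openness argument correctly reduces the conjecture to exhibiting a single pair $(A,X)$ of each admissible type with $\Phi_X$ an embedding. But the test pair you propose --- a controlled degeneration toward the pull-back of a theta divisor --- is exactly where an embedding is least likely to be witnessed: for the pull-back itself the Gauss map factors through $Y=\Theta/\pm 1$ with fibres of cardinality $2p\cdot g!/2$, and by the paper's main theorem $\Phi_X$ is at best birational there, not an embedding. Since ``being an embedding'' is an open condition in families rather than a closed one, the behaviour at or near this special fibre yields no conclusion for the nearby general pair unless you control the whole double point scheme of $\Phi_X$ in the family; that control is precisely your unproven steps (i)--(iii). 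Moreover step (ii) (full symmetric monodromy, or even the generic-covering property, for the Gauss map of a general $X$ with $d\geq 2$) is itself an open problem rather than a routine upgrade of Debarre's theorem, and step (iii) asks the specific, highly non-generic system $|\hol_A(X)|$ --- whose associated map $\phi$ has positive-dimensional fibres on $A$ whenever $d-1<g$ --- to behave like a generic projection on the $(g-1)$-dimensional parallel-tangent locus $P_X$ and on its infinitesimal analogue. These three points are the actual mathematical content of the conjecture. Your write-up is a sensible research plan, and it honestly flags its own gaps, but it is a strategy outline, not a proof.
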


We end the paper discussing the conjecture.

\section{Proof of the main theorem \ref{genbirat} }

We give first a quick outline of the strategy of the proof.

The first step 2.1 reduces to the case where the Pfaffian $d$ of the polarization type is a prime number $p$.

Step 2.2 considers the particular case where  $d=p$ and  where $X$ is the pull-back of the Theta divisor $\Theta$ of
a principally polarized Abelian variety. In this case the Gauss map of $X$ factors through $Y: =  \Theta / \pm 1$.

Step 2.3 shows that if $g = dim (A)=2$, then Step 2.2 works and the theorem is proven.

Step 2.4, the Key Step, shows that if $ g \geq 3$ and 2.2 does not work, then the image $\Sigma$ of the canonical map
lies birationally between $X$ and $Y$, hence corresponds to a subgroup of the dihedral group $D_p$

Step 2.5 finishes the proof, showing that, in each of the two possible cases corresponding to the  subgroups
of $D_p$, the canonical map becomes birational onto its image for a general deformation of $X$.

\subsection{Reduction to the case of a polarization of type  $(1,\dots, 1, p)$,
with $p$ a prime number.}
We shall proceed by induction, basing on the following concept.

We shall say that a polarization type $\overline{d} : = (d_1, d_2, \dots, d_{g})$ is divisible by
$\overline{\de} : = (\de_1, \de_2, \dots, \de_{g})$ if, for all $i=1, \dots, g$, we have that $\de_i$ divides
$d_i$, $ d_i = r_i \de_i$.

\begin{lemma}\label{induction}
Assume that the polarization type $\overline{d} $ is divisible by
$\overline{\de} $. Then, if the main Theorem \ref{genbirat} holds true for $\overline{\de}$, then it also holds
true  for $\overline{d}$.
\end{lemma}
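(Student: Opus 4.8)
The plan is to exploit the isogeny relating an abelian variety with a $\overline{d}$-polarization to one with a $\overline{\delta}$-polarization, and then to use that a general pair of one type pulls back from (or maps to) a general pair of the other type. Concretely, suppose $\overline{d} = (r_1\delta_1,\dots,r_g\delta_g)$, and start with a general pair $(A',X')$ carrying a polarization of type $\overline{\delta}$. Write $A' = V/\Lambda'$. Inside $V$ there is a sublattice $\Lambda \subset \Lambda'$ such that, setting $A := V/\Lambda$, the natural map $\pi : A \to A'$ is an isogeny of the appropriate degree, and the pulled-back line bundle $\pi^*\hol_{A'}(X')$ gives a polarization of type exactly $\overline{d}$ on $A$ (choose the sublattice index pattern to match the ratios $r_i$; this is the standard description of isogenies compatible with changing a polarization type via the elementary divisors). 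Then $X := \pi^{-1}(X') = \pi^*X'$ is a smooth ample hypersurface in $A$ of type $\overline{d}$, and the point is to compare the canonical maps $\Phi_X$ and $\Phi_{X'}$.

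The key observation is that $\pi|_X : X \to X'$ is an étale covering (being the restriction of an isogeny), so $K_X = (\pi|_X)^* K_{X'}$, and more precisely $H^0(X, K_X)$ decomposes under the deck group $G := \ker\pi$ into eigenspaces indexed by characters of $G$; the invariant part $H^0(X,K_X)^G$ is exactly $(\pi|_X)^*H^0(X', K_{X'})$. Using Lemma \ref{canmap}, the canonical map $\Phi_X$ is therefore the composition of the map given by the full $H^0(X,K_X)$ with nothing lost, and it dominates (via a linear projection to the $G$-invariant subsystem) the map $(\pi|_X)^*\Phi_{X'} = \Phi_{X'} \circ \pi|_X$. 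Hence we get a commuting diagram
\[
\begin{CD}
X @>{\Phi_X}>> \Sigma_X \\
@V{\pi|_X}VV @VV{p}V \\
X' @>{\Phi_{X'}}>> \Sigma_{X'},
\end{CD}
\]
where $p$ is the linear projection induced by restricting to the invariant sub-linear-system. Now if $\Phi_{X'}$ is birational onto its image (the hypothesis, since the main theorem holds for $\overline{\delta}$), then $\Phi_{X'}\circ\pi|_X$ has the same degree as $\pi|_X$, namely $\deg\pi = \#G$. Therefore $\deg(\Phi_X) \mid \#G$. To conclude $\deg(\Phi_X) = 1$, I would show that the extra coordinates of $\Phi_X$ coming from the non-invariant eigenspaces of $H^0(X,K_X)$ separate the fibers of $\pi|_X$: a point $x$ and its translate $x + t$ (for $0\neq t \in G$) are distinguished because some theta-function section transforms by a nontrivial character and hence takes different values at $x$ and $x+t$. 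This separation is where generality of $(A',X')$ — hence of $(A,X)$ — enters.

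The main obstacle I expect is precisely this last separation step: one must check that for a \emph{general} pair the non-invariant part of the canonical system genuinely separates the translates under $G$, i.e. that $\Phi_X$ does not factor nontrivially through any intermediate quotient $X \to X/G_0$ with $G_0 \subset G$ nontrivial. The cleanest route is a degeneration/specialization argument: it suffices to produce \emph{one} pair $(A,X)$ of type $\overline{d}$ (not necessarily pulled back from a general $\overline{\delta}$-pair, or pulled back from a carefully chosen one) for which $\Phi_X$ is birational, since birationality of the canonical map is an open condition in families and the family of $\overline{d}$-polarized pairs is irreducible. Combined with the degree bound $\deg(\Phi_X)\mid \#G$ from the diagram above — which already forces the generic degree to be constant on the irreducible family — exhibiting a single birational instance pins the generic value to $1$. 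Thus the reduction comes down to: (i) the isogeny construction linking the two polarization types, (ii) the eigenspace decomposition of the canonical system under the deck group and the resulting commuting diagram, and (iii) an openness/irreducibility argument upgrading "generic degree divides $\#G$" to "generic degree equals $1$", using that the statement is known for $\overline{\delta}$.
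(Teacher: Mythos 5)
Your overall skeleton matches the paper's: pull back a general $\overline{\de}$-pair $(A',X')$ along an isogeny, use that $K_X=\pi^*K_{X'}$ to get the commuting square, deduce that $\deg\Phi_X$ divides $\deg\pi$, and then try to rule out a nontrivial factorization by showing the non-invariant part of $H^0(X,\hol_X(K_X))$ separates the fibres of $\pi$. But you stop exactly at the point where the actual content lies, and the fallback you propose does not close it. The missing idea is a concrete reason why some eigenspace for a \emph{nontrivial} character of the deck group is nonzero. The paper gets this for free from a dimension count: by Lemma \ref{canmap}, $h^0(K_X)=d+g-1$ and $h^0(K_{X'})=\de_1\cdots\de_g+g-1$, and since the Pfaffian multiplies by $\prod r_i>1$ under pullback, the invariant subspace $\pi^*H^0(K_{X'})$ cannot be all of $H^0(K_X)$; hence there is an eigensection $\theta_1$ for a nontrivial character, and the pencil $(\theta_0:\theta_1)$ (with $\theta_0$ invariant) separates a general orbit. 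Note also that one section ``taking different values at $x$ and $x+t$'' is not meaningful by itself; you need the ratio of two eigensections, which is why the invariant eigenspace must be used together with the nontrivial one. Your alternative route --- exhibit a single $\overline{d}$-pair with birational canonical map and spread it out by irreducibility/openness --- is not executed: no such example is produced, and producing one is essentially as hard as the lemma itself (indeed the lemma exists precisely to avoid having to do this for every type).

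A second, smaller gap: with the full deck group $G\cong\oplus_i(\ZZ/r_i)$ the implication ``$1<\deg\Phi_X\mid\#G$ $\Rightarrow$ $\Phi_X$ factors through some quotient $X\to X/G_0$'' is not automatic, since a general fibre of $\Phi_X$ is only a subset of a $G$-orbit, not necessarily a coset of a subgroup. The paper sidesteps this by an induction reducing to the case where the isogeny is cyclic of prime degree $p$, so that $\deg\Phi_X\in\{1,p\}$ and degree $p$ forces $\Phi_X$ to factor birationally through $\pi$. You should incorporate that prime-by-prime reduction before running the separation argument.
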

\begin{proof}
We let $(A', X')$ be a general pair giving a polarization of type $\overline{\de}$, so that $\Phi_{X'}$ is birational. 

There exists an \'etale
covering $ A \ra A'$, with kernel $ \cong \oplus_i (\ZZ/ r_i)$, such that the pull back of $X'$ is a polarization of type
$\overline{d}$. 

By induction, we may assume without loss of generality that all numbers $r_i=1$, with one exception $r_j$, which is a prime number $p$.

Then we have $\pi : X \ra X'$, which is an \'etale quotient with group $\ZZ/p$.  Moreover, since the  canonical divisor $K_X$
is the pull-back
of $K_{X'}$, the composition $\Phi_{X'} \circ \pi$ factors through $\Phi_X$, and a morphism $f : \Sigma \ra \Sigma'$.

Since by assumption  $\Phi_{X'}$ is birational, either $\Phi_X$ is birational, and there is nothing to prove,
or $\Phi_X$ has degree $p$, $f : \Sigma \ra \Sigma'$ is birational, hence $\Phi_X$ factors (birationally) through $\pi$.

To contradict the second alternative, it suffices to show that the canonical system $H^0 (X, \hol_X (K_X))$ separates the points
of a general fibre of $\pi$.  Since each fibre is an orbit for the group $\ZZ/p$, it suffices to show that there are at least two different eigenspaces
in the vector space $H^0 (X, \hol_X (K_X))$, with respective eigenvalues $1$,  $\zeta$, where $\zeta$ is a primitive  $p$-th root of unity.

Since then we would have as projection of the canonical map a  rational map  $F : X \dashrightarrow \PP^1$ 
such that, for $g \in \ZZ/p$,  $$ F(x) = (x_0, x_1) \neq F(gx) = (x_0, \zeta x_1) ,$$
thereby separating the points of a general fibre.

Now, if there were only  one  non-zero eigenspace, the one  for the  eigenvalue $1$,  since we have an eigenspace decomposition 
$$H^0 (X, \hol_X (K_X))= H^0 (X,\pi^* ( \hol_{X'} (K_{X'})) =    H^0 (X',\pi_* \pi^* ( \hol_{X'} (K_{X'}))= $$ 
$$ = \oplus_1^p H^0 (X', \hol_{X'} (K_{X'} + i \eta)),$$
(here $\eta$ is a nontrivial divisor of $p$-torsion), 
all eigenspaces would have dimension zero, except for the case $i=p$. In particular   we would have that
$H^0 (X, \hol_X (K_X))$ and $H^0 (X', \hol_{X'} (K_{X'} ))$ have the same dimension.

 But this is a contradiction,
 since  the dimension $h^0 (X, \hol_X (K_X))$ equals by lemma \ref{canmap}  the sum of the Pfaffian of $X$ with $g-1$,
 and the Pfaffian of $X$ is $p$-times the Pfaffian of $X'$.
 
\end{proof}

\subsection{The special case where $X$ is a pull-back of a Theta divisor.}
Here, we shall consider  a similar situation, assuming that $X$ is a polarization of type  $(1,\dots, 1, p)$,
and that $X$ is the pull back of a Theta divisor $\Theta \subset A'$ via an isogeny $A \ra A'$ with kernel $\cong \ZZ/p$.

We define $Y : = \Theta / \pm 1$ and use the cited result of Debarre \cite{debarre}.

We consider the Gauss map of $X$, $f  : X \ra P : = \PP^{g-1}$, and observe that we have a factorization 
$$ f = \Psi \circ \phi, \ \phi : X \ra Y , \ \Psi : Y \ra P.$$
The essential features are that:

(i) $\phi$ is a Galois quotient $ Y = X /G$, where $G$ is the dihedral group $D_p$.

(ii) $\Psi$ is a generic map, in particular with monodromy group equal to $\mathfrak S_N$, $ N = g! / 2$.

Either the theorem is true, or, by contradiction, we have a factorization of $f$ through $\Phi_X : X \ra \Sigma$,
whose  degree $m$ satisfies $ 2p N > m \geq 2 $.

\subsection{Warm-up case: $g=2$, $X$ defines a polarization of type $(1,p)$ with $p$ a prime number.}
In this case $X$ is a $\ZZ/p$ \'etale covering of a genus $2$ curve $C =\Theta$, and $X/G= \PP^1$.

That the  canonical map of $X$ is not birational means that $X$ is hyperelliptic, and then $f : X \ra \PP^1$
factors through the quotient of the  hyperelliptic involution $\iota$, which centralizes $G$.
If we set $\Sigma := X / \iota \cong \PP^1$, we see that $\Sigma$ corresponds to a subgroup of order two of
$G$: since $\iota$ centralizes $G$, and is not contained in $\ZZ/p$, follows that $G$ is abelian, whence $p=2$.

We prove here a  result which might  be known (but we could not find it  in \cite{barth}; a referee points out that,
under the stronger assumption that $A$ is also general,
such a  result is proven in \cite{pirola} and in \cite{pp}).

\begin{lemma}
The general divisor in a linear system $|X|$ defining  a polarization of type $(1,2)$ is not hyperelliptic.
\end{lemma}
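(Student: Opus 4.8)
The plan is to work with an explicit genus-$2$ curve $C$ together with a polarization of type $(1,2)$, realized concretely as an \'etale $\ZZ/2$-quotient. Recall that a principally polarized abelian surface is the Jacobian $JC$ of a genus $2$ curve, and a polarization of type $(1,2)$ on an abelian surface $A$ arises (for general such $A$) as the pull-back of the principal polarization under an isogeny $A \to A'$ with kernel $\ZZ/2$; by the $(1,1)$--$(1,2)$ correspondence and Lemma \ref{induction}, a general divisor $X$ in a type-$(1,2)$ system is a connected smooth curve which is an \'etale double cover $\pi : X \to C$ of a general genus $2$ curve $C = \Theta \subset A'$. By Riemann--Hurwitz, $X$ has genus $3$. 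So the statement to prove is: \emph{for a general genus $2$ curve $C$ and a general \'etale double cover $X \to C$, the genus $3$ curve $X$ is non-hyperelliptic}.

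First I would reduce non-hyperellipticity to a monodromy/symmetry statement, exactly in the spirit of the Warm-up subsection. Suppose $X$ is hyperelliptic, with hyperelliptic involution $\iota$. The covering group $G_0 = \ZZ/2 = \langle \sigma \rangle$ of $\pi$ acts on $X$, and $\iota$ is central in $\Aut(X)$ (the hyperelliptic involution always is), so $\langle \sigma, \iota\rangle$ is an abelian group of order $4$ (it has order $4$ because $\iota \notin \langle \sigma\rangle$: the quotient $X/\sigma = C$ has genus $2 \neq 0$). Now $X / \iota \cong \PP^1$ and $C = X/\sigma$ both receive maps from $X$; the intermediate quotient $X / \langle \sigma,\iota\rangle$ is then $C / \bar\iota = \PP^1$, so $\bar\iota$ is an involution on $C$ with $C/\bar\iota \cong \PP^1$, i.e.\ $\bar\iota$ is the hyperelliptic involution of $C$ itself. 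The key point: $\iota$ lies over $\bar\iota$, and since $\pi$ is \'etale, $\iota$ is a lift of $\bar\iota$ to the double cover $X$. Such a lift exists if and only if the double cover $X \to C$, viewed as a $2$-torsion point $\eta \in JC[2] \setminus \{0\}$, is invariant under the action of $\bar\iota^*$ on $JC$ --- but $\bar\iota^*$ acts as $-1$ on $JC$ (the hyperelliptic involution acts as $-1$ on the Jacobian), so every $\eta$ is automatically $\bar\iota^*$-invariant as a point. So this formal obstruction vanishes, and I must instead extract the real constraint: the lift $\iota$ must square to the identity and commute with $\sigma$, and crucially its fixed locus must be compatible with $X/\iota \cong \PP^1$, i.e.\ $\iota$ must have exactly $2 \cdot 3 + 2 = 8$ fixed points on $X$.

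The honest obstacle, and where I expect to spend the real effort, is this fixed-point count. The hyperelliptic involution $\bar\iota$ on $C$ has $6$ fixed points (the Weierstrass points $w_1,\dots,w_6$). A lift $\iota$ of $\bar\iota$ to the \'etale double cover $X \to C$ determined by $\eta \in JC[2]\setminus\{0\}$ permutes the two points of $X$ over each $w_i$; above $w_i$ it either fixes both points or swaps them, and one computes that it fixes both points over $w_i$ precisely when $\eta$, written in the standard basis of $JC[2] = \{0\} \cup \{w_i + w_j : i\neq j\}/\sim$ (Mumford's description via pairs of Weierstrass points), "contains" $w_i$ in the appropriate combinatorial sense --- concretely, $\eta$ corresponds to a partition-type / even subset $S \subset \{1,\dots,6\}$ and the number of fixed points of $\iota$ is $2|S|$ or $2(6-|S|)$ depending on conventions. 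For $\iota$ to be hyperelliptic we need $8$ fixed points, forcing $|S| = 4$ (or $|S|=2$), which does occur for \emph{some} $\eta$. Hence the correct statement is subtler than "never": for special $\eta$ the cover $X$ can be hyperelliptic, but for \emph{general} $\eta$ (the generic point of $JC[2]$ is not the relevant notion since $JC[2]$ is finite --- rather, we need: for general $C$, no $\eta$ makes $X$ hyperelliptic, OR we quotient by the lift condition failing). I would therefore pin down the statement as: the locus in the moduli space of pairs $(C,\eta)$ where $X$ is hyperelliptic is a proper closed subvariety, by exhibiting one pair $(C,\eta)$ with $X$ non-hyperelliptic (a dimension count: pairs $(C,\eta)$ form a $3$-dimensional family, hyperelliptic genus $3$ curves form a $5$-dimensional family but those arising as \'etale $\ZZ/2$-covers of genus $2$ curves with an order-$4$ abelian automorphism group impose codimension $\geq 1$ --- a single explicit example, e.g.\ taking $C : y^2 = f(x)$ with $f$ generic and $\eta$ the class of a generic pair of Weierstrass points so that $|S| = 2$ or an odd configuration, then checking $X$ has no extra involution with $8$ fixed points via the canonical embedding of $X$ as a smooth plane quartic) suffices, and openness of non-hyperellipticity in families finishes the proof.
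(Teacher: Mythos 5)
Your reduction at the very first step is incorrect, and it leads you to try to prove a false statement. A general divisor in the pencil $|X|$ (note $h^0=d=2$, so $|X|$ is a pencil with $X^2=4$ base points) is \emph{not} an \'etale double cover of a genus $2$ curve: only finitely many members of the pencil are pull-backs of theta divisors under the degree-$2$ isogeny $A\to A'$. What every member of the pencil does carry is a degree-$2$ map to an elliptic curve $E'$, branched exactly at the $4$ base points. Worse, the statement you reduce to --- ``a general \'etale double cover of a general genus $2$ curve is non-hyperelliptic'' --- is false: \emph{every} such cover is hyperelliptic. Indeed every nonzero $\eta\in JC[2]$ corresponds to a $2$-element (equivalently $4$-element) subset $S$ of the six Weierstrass points (there are $\binom{6}{2}=15=2^4-1$ of them, so there are no other options; odd subsets give theta characteristics, not $2$-torsion points), and by Mumford's description the cover is the fibre product over $\PP^1$ of a rational double cover (branched at the $2$ points of $S$) and an elliptic double cover (branched at the other $4$), hence a double cover of that $\PP^1$-factor branched at $8$ points, i.e.\ hyperelliptic of genus $3$. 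So the ``special'' condition $|S|\in\{2,4\}$ that you flag near the end is in fact always satisfied, and the explicit non-hyperelliptic example you propose to exhibit does not exist within your framework.

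The paper's proof runs in the opposite direction and uses exactly the fact you tripped over. It first shows that the distinguished member $D=\pi^*\Theta$ is a $(\ZZ/2)^2$-cover of $\PP^1$ (hence hyperelliptic), and that every member $X$ of the pencil $|D|$ is a double cover of an elliptic curve $E'$ via an involution $\sigma$ with $4$ fixed points, with $J(X)$ isogenous to $A\times E'$. If some $X$ in the pencil were hyperelliptic with involution $\iota$, then $\langle\sigma,\iota\rangle\cong(\ZZ/2)^2$ and the third involution $\iota\circ\sigma$ is fixed-point-free with quotient a genus $2$ curve $C'$ whose Jacobian is isogenous to $A$; thus the hyperelliptic members of the pencil are precisely the \'etale double covers of such genus $2$ curves, and there are only finitely many of these in a one-dimensional linear system. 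The correct argument therefore hinges on counting how many members of the pencil can be \'etale covers of genus $2$ curves, not on showing that such covers fail to be hyperelliptic --- they never fail.
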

\begin{proof}
In this case of a polarization of type $(1,2)$,   the  linear system $|X|$ on $A$  is a pencil of genus $3$ curves
with $4$ distinct base points, as we now show.

We consider, as before, the inverse image $D$ of a  Theta divisor for $g=2$, which will be here called $C$, so that $A' = J(C)$.

The curve $D$ is hyperelliptic, being a Galois covering of $\PP^1$ with group $G = (\ZZ/2)^2$ (a so-called bidouble cover), fibre product of
two coverings respectively branched on $2,4$ points (hence $D = E \times_{\PP^1} \PP^1$, where $E$ is an elliptic curve,
and $C $ is the quotient of $D$ by the diagonal involution). The divisor cut by $H^0(A, \hol_A(X))$ on $X$
is the ramification divisor of $X \ra E$, hence it consists of $4$ distinct points.

The double covering $ D \ra E$ of an elliptic curve is also induced by the homomorphism $ J(D) \ra E$, with kernel isogenous
to $A'$. Therefore, moving $X$ in the linear system $|D|$, all these curves are a double cover of some elliptic curve $E'$,
such that $J(X)$ is isogenous to $A \times E'$.

Hence $ E' = X / \sigma$, where $\sigma$ is an involution.
 
Assume now that $X$ is hyperelliptic and denote by   $\iota$ the hyperelliptic involution: since $\iota$ is central,
$\iota$ and $\sigma$ generate a group $G \cong (\ZZ/2)^2$, and we claim that the quotient of the involution $\iota \circ \sigma$ 
yields a genus $2$ curve $C'$ as quotient of $X$.  In fact,  the quotient $X / G \cong \PP^1$, 
hence we have a bidouble cover of $\PP^1$ with branch loci of respective degrees $2,4$
and the quotient $C'$ is a double covering of $\PP^1$ branched in $6$ points.

Hence the hyperelliptic curves in $|D|$ are just the degree $2$ \'etale coverings   of genus $2$ curves, and are only a finite number 
in $|D|$ (since $J(C')$ is isogenous to $A$).
 \end{proof}

\subsection{We proceed with the  Key Step: showing that birationally $\Sigma$ must  lie between $X$ and $Y$.}
\subsubsection{To achieve this we need to recall some basic facts about covering spaces.} 

We shall use the Grauert-Remmert  \cite{g-r} extension of Riemann's theorem, stating that finite coverings $ X \ra Y$ between normal
varieties correspond to covering spaces $X^0 \ra Y^0$ between respective Zariski open subsets of $X$, resp. $Y$.

\begin{rem}
Given a connected unramified  covering $ X\ra Y$ between good spaces, we let $\widetilde{X}$ be the universal covering of $X$,
and we write  $ X =  H \backslash \widetilde{X}$,   $ Y =  \Ga \backslash \widetilde{X}$, hence $x \in X$ as $x =H \  \widetilde{x}$, $y \in Y$ as  $y = \Ga \ \widetilde{x} $, where both groups act on the left.

(1) Then the group of covering trasformations 
$$ G : = Aut (X \ra Y) \cong   N_H / H,$$
where $N_H$ is the normalizer subgroup of $H$ in $\Ga$.

(2) The monodromy group $Mon  (X \ra Y)$, also called the Galois group of the covering in \cite{debarre},
is defined as $\Ga / \core{H}$, where $\core{H}$ is the maximal normal subgroup of $\Ga$ contained in $H$,
$$ \core{H} = \cap_{\ga \in \Ga} H^{\ga} =  \cap_{\ga \in \Ga} (\ga^{-1} H {\ga} ),$$
 which is also called the normal core of $H$ in $\Ga$.

(3) The two actions of the two above groups on the fibre over $\Ga \widetilde{x}$, namely $H \backslash \Ga 
\cong \{ H \ga  \widetilde{x}| \ga \in \Ga \}$, commute, since $G$ acts on the left, and $ \Ga$ acts on the right
(the two groups coincide exactly when $H$ is a normal subgroup of $\Ga$, and we have what is called a normal covering space, or
a Galois covering).

We have in fact an antihomomorphism $$\Ga \ra Mon  (X \ra Y)$$ with kernel $\core{H}$.

(4) Factorizations of the covering $ X \ra Y$ correspond to intermediate subgroups $H'$ lying in between,
 $ H \subset H' \subset \Ga$.
\end{rem}

\subsubsection{Our standard situation}
We consider the composition of finite coverings $ X \ra Y \ra P$.

To simplify our notation, we consider the corresponding composition of unramified covering spaces
of Zariski open sets, and the corresponding fundamental groups 
$$  1 \ra K_1 \ra H_1 \ra \Ga_1.$$

Then the monodromy group of the Gauss map of $X$ is the quotient $\Ga_1 / \core{K_1}$.

We shall now divide all the above groups  by the normal subgroup $\core{K_1}$, and obtain 
$$  1 \ra K \ra H \ra \Ga, \ \Ga = Mon (X^0 \ra P^0).$$

Since $X\ra Y$ is Galois, and by Debarre's theorem $ Y \ra P$ is a generic covering, we have:

\begin{itemize}
\item
we have a surjection of the monodromy group $\Ga \ra \mathfrak S_N : =  \mathfrak S ( H \backslash \Ga)$  with kernel $\core{H}$,  
where $N = \left| H \backslash \Ga \right| = \frac{g!}{2}$. 
\item
$H$ is the inverse image of a stabilizer, hence $H $ maps onto $\mathfrak S_{N-1}$,
\item
$H$ is a maximal subgroup of $\Ga$, since $\mathfrak S_{N-1}$ is a maximal subgroup of $\mathfrak S_{N}$;
\item
$ H / K \cong G : = D_p$, 
\item
$\Ga$ acts on the fibre $M : = K \backslash \Ga$
\item
$K' : = K \cap \core{H}$  is normal in $K$, and $K'' : = K / K'$ 
maps isomorphically to  a normal subgroup of $\mathfrak S_{N-1}$, which is a quotient of $G$, hence it has index at most $2p$.
Therefore there are only two possibilities:
 \subitem
(I) $K'' = \mathfrak S_{N-1}$, or
\subitem
(II) $K'' = \mathfrak A_{N-1}$.
\end{itemize}

We consider now the case where there is a nontrivial factorization of the Gauss map $f$,
$ X \ra \Sigma \ra P$. 

Define $\widehat{H}$ to be the subgroup of the monodromy group $\Ga$ associated to $\Sigma$, so that
$$ 1 \ra K \ra \widehat{H} \ra \Ga,$$
and set:
$$   H' : =   \widehat{H} \cap \core{H}, \ H '' : = \widehat{H} / H' , \ H'' \subset   \mathfrak S_{N}.$$

Obviously we have $K'' \subset H''$: hence 

\begin{itemize}
\item
in case (I), where  $K''  = \mathfrak S_{N-1}$, we have either
\subitem
(Ia)  $H'' = \mathfrak S_{N-1}$ or
\subitem
(Ib)  $H'' = \mathfrak S_{N}$, while
\item
in case (II), where $K'' = \mathfrak A_{N-1}$,either
\subitem
(IIa) $H'' = \mathfrak A_{N-1}$,
\subitem
(IIb) $H'' = \mathfrak A_{N}$, or
\subitem
(IIc) $H'' = \mathfrak S_{N}$ 

\end{itemize}

We first consider  cases b) and c) where the index of $ H'' $ in $ \mathfrak S_{N}$ is $\leq 2$. Hence the index
of $\hat{H}$ in $\Ga$ is at most $2$ times the index of $H'$ in  $\core{H}$. Since $K' \subset H'$,
and  $ \core{H} / K'$ is a quotient of $ H/K$, we see that the index of $\hat{H}$ in $\Ga$ divides $2p$
( in fact, in case (IIb)   $K'' = \mathfrak A_{N-1}$, hence in this case the cardinality of  $ core(H) / K'$
equals $p$).

\begin{lemma}
Cases  (b) and (c),  where the degree $m$ of the covering $\Sigma \ra P$ divides $2p$,  are not possible.

\end{lemma}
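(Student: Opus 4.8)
The plan is to derive a contradiction from the assumption that the degree $m$ of $\Sigma \ra P$ divides $2p$, by analysing the intermediate subgroup $\widehat{H}$ with $K \subset \widehat{H} \subset \Ga$, $[\Ga : \widehat{H}] = m \mid 2p$, in combination with the constraint that $X \ra \Sigma$ is a nontrivial factorization of the Gauss map (so $\widehat H \subsetneq \Ga$ and $\widehat H \ne H$, since $\Sigma$ is not birational to $X$). First I would exploit the large symmetric quotient: we have the surjection $\Ga \onto \mathfrak S_N$ with kernel $\core H$, $N = g!/2$, and we have seen $K'' = K/K'$ is either $\mathfrak S_{N-1}$ or $\mathfrak A_{N-1}$ inside $\mathfrak S_{N-1} \subset \mathfrak S_N$. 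Since $K \subset \widehat H$, the image $\widehat H'' := \widehat H /(\widehat H \cap \core H)$ contains $K''$, hence contains $\mathfrak A_{N-1}$; and because $[\Ga:\widehat H] \mid 2p$ is tiny compared to $N$ (here I use $N = g!/2 \geq 3$, in fact $N$ is enormous, so $2p \ll N!/(N-1)!=N$ once $p$ is fixed and $g$ large — but more robustly, a subgroup of $\mathfrak S_N$ containing $\mathfrak A_{N-1}$ and of index dividing $2p < N$ must be all of $\mathfrak S_N$ or $\mathfrak A_N$ or a point-stabilizer, and a point-stabilizer has index exactly $N \nmid 2p$). So $\widehat H''$ is $\mathfrak S_N$ or $\mathfrak A_N$, i.e. we are in case (b) (or (c)) and $[\mathfrak S_N : \widehat H''] \le 2$.

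Next I would push this back up to $\Ga$: from $[\mathfrak S_N:\widehat H''] \le 2$ we get that $[\Ga : \widehat H]$ divides $2 \cdot [\core H : \widehat H \cap \core H]$, and since $K' = K \cap \core H \subset \widehat H \cap \core H$ and $\core H / K'$ is a quotient of $H/K \cong D_p$ (this is exactly the bookkeeping already carried out in the paragraph preceding the Lemma), the index $[\Ga:\widehat H]$ divides $2p$ — consistent, so no contradiction yet from cardinality alone. The contradiction must instead come from the \emph{structure} of the covering $X \ra \Sigma$. The key point is that $X \ra Y$ is Galois with group $G = D_p$, i.e. $H/K \cong D_p$ acting on $K\backslash\Ga$ on the... actually $H \supset K$ with $H/K = G$; the subgroup $\widehat H$ sits between $K$ and $\Ga$ but need \emph{not} contain $H$. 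I would argue that since $\widehat H'' \supseteq \mathfrak A_{N-1}$ is already of index $\le 2$ in $\mathfrak S_N = \Ga/\core H$, while $H'' = H/(H\cap \core H)$ equals $\mathfrak S_{N-1}$ (index $N$ in $\mathfrak S_N$), the subgroups $\widehat H$ and $H$ of $\Ga$ generate a subgroup whose image in $\mathfrak S_N$ is all of $\mathfrak S_N$; hence $\langle \widehat H, H\rangle = \Ga$ (as $\core H \subset H$). This means the induced covering $X \ra \Sigma \times_P Y$ onto one of its components is — comparing with the fibre $M = K\backslash \Ga$ versus $\widehat H \backslash \Ga$ and $H\backslash\Ga = \{1,\dots,N\}$ — forced to have $\Sigma$ and $Y$ "algebraically independent over $P$", so that $\deg(\Sigma/P) \cdot \deg(Y/P)$ divides $\deg(X/P) = 2pN$; with $\deg(Y/P) = N$ this gives $m \mid 2p$, again consistent, so the true contradiction is:

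The decisive step, and the one I expect to be the main obstacle, is to rule out that a degree-$(\le 2p)$ cover $\Sigma \ra P$ can dominate the Gauss cover nontrivially \emph{through} $X$ when $p$ is prime. Here I would use that $K \subset \widehat H \subset \Ga$ with $\widehat H$ of index dividing $2p$ forces, after intersecting with $\core H$ and passing to $K''$-quotients, that $\widehat H/K'$ is an extension of $\widehat H''/K''$ (a quotient of $\mathbb Z/2$ in case (b)) by $\core H/K'$ (a quotient of $D_p$), hence $\widehat H / K$ is solvable of order dividing $2p$ and contains no subgroup conjugate to all of $H/K = D_p$ unless $\widehat H \supseteq H$ — but $\widehat H \supseteq H$ would mean $\Sigma$ factors through $Y$, i.e. the Gauss map $X \to P$ factors as $X \to Y \to \Sigma \to P$, contradicting that $\Psi: Y \to P$ is a generic covering (its monodromy $\mathfrak S_N$ is simple-modulo-$\mathfrak A_N$ and has no quotient of order dividing $2p$ for $N$ large, so $\Sigma = Y$, contradicting $m < 2pN$, i.e. $m\le 2p < 2pN$ forces $\Sigma \subsetneq Y$... ). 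Concretely: if $\widehat H \supseteq H$ then $Y \to P$ factors through $\Sigma \to P$ of degree $m \mid 2p$, so $\mathfrak S_N = \Mon(Y/P)$ surjects onto a transitive subgroup of $\mathfrak S_m$ with $m \le 2p$; but $\mathfrak S_N$ for $N = g!/2 \ge 3$ has no nontrivial transitive permutation representation of degree $< N$ except the trivial one, so $m = 1$ — contradicting the nontriviality of $\Sigma \to P$. And if $\widehat H \not\supseteq H$, then $\widehat H \cap H \subsetneq H$ is a proper subgroup with $K \subset \widehat H \cap H \subsetneq H$, $H/K = D_p$, so $[\,H : \widehat H\cap H\,] \in \{2,p,2p\}$ is $> 1$; since $\langle \widehat H, H\rangle = \Ga$ (shown above) and $[\Ga:\widehat H]=m$, the double-coset count gives $m \ge [H : \widehat H \cap H] > 1$ with equality patterns that, combined with $m \mid 2p$ and the explicit structure of $D_p$ (whose only normal subgroups are $1$, $\mathbb Z/p$, $D_p$), force $\widehat H \cap H$ to be normal in $H$, hence $\widehat H \supseteq H$ after all — the contradiction. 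Thus no such $\Sigma$ exists, and case (b) is impossible. The subtlety I must handle with care is exactly this last dichotomy: translating "$\widehat H \not\supseteq H$ but $\langle\widehat H,H\rangle=\Ga$, index $m\mid 2p$" into a statement about $D_p$ that is self-contradictory; I expect to need the primality of $p$ and the precise subgroup lattice of $D_p$ (subgroups of order $2$ are non-normal for $p$ odd, the unique subgroup of order $p$ is normal) to close it.
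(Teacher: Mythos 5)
Your proposal attacks the lemma by pure group theory inside the monodromy group $\Ga$, but the lemma is not a consequence of the group-theoretic data alone, and this is the fundamental gap. Indeed, in case (II) one has $K''=\mathfrak A_{N-1}\subset \mathfrak A_N$, so the preimage $\widehat H$ of $\mathfrak A_N$ under $\Ga \onto \mathfrak S_N$ is a perfectly good subgroup with $K\subset \widehat H\subset \Ga$ of index $2\mid 2p$: the subgroup lattice does not forbid an intermediate cover of degree dividing $2p$. What rules it out is geometry, and this is exactly how the paper argues: since $\PP^{g-1}$ is simply connected, $\Sigma\ra P$ must ramify; since the branch divisor $\sB$ is irreducible and reduced and the ramification divisor of $Y\ra P$ maps \emph{birationally} to $\sB$ (Debarre's theorem), while $X\ra Y$ is quasi-\'etale of degree $2p$, the ramification divisor $\sR_f$ of the Gauss map maps to $\sB$ with degree exactly $2p$; computing the same degree through the factorization $X\ra\Sigma\ra P$ gives $2p=\de N(2p/m)$, i.e. $m=\de N$, forcing $g!\mid 4p$ — absurd for $g\ge 4$ — and leaving only the residual cases $g=3$, $p=3$, $m\in\{3,6\}$, which the paper kills by separate geometric arguments (degree of the branch curve of a cubic surface in $\PP^4$, and fibre products $\Sigma\times_P Y$). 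None of this ramification input appears in your proposal.

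Beyond the missing idea, several individual steps would fail. The inequality $2p<N$ is unjustified and false in general (for $g=3$ one has $N=3$ while $p$ is arbitrary). The assertion that $\mathfrak S_N$ has no nontrivial transitive permutation representation of degree $<N$ overlooks the degree-$2$ action through $\mathfrak A_N$, which is precisely the representation one must worry about. In the branch $\widehat H\not\supseteq H$, the claim that ``equality patterns'' in a double-coset count force $\widehat H\cap H$ to be normal in $H$ is not an argument, and even granted normality, the inference ``$\widehat H\cap H$ normal in $H$, hence $\widehat H\supseteq H$'' does not follow. Finally, even a repaired version of your dichotomy would still leave the genuine exceptional cases at $g=3$ untouched, whereas they are where most of the work in the paper's proof lies. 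The one sound observation in your plan — that $\widehat H\supseteq H$ together with the maximality of $\mathfrak S_{N-1}$ forces $\Sigma=Y$ or $\Sigma=P$ — belongs to case (a), not to the case the lemma is excluding.
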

\begin{proof}
Observe in fact that $m$ equals the index of $\widehat{H}$ in $\Ga$. We have two factorizations of 
the Gauss map $f$:
$$ X \ra Y \ra P , \ X \ra \Sigma \ra P,$$ hence we have that the degree of $f$ equals
$$ (2p) N = m ( N \frac{2p}{m}).$$

Consider now the respective ramification divisors $\sR_f,  \sR = \sR_{Y},  \sR_{\Sigma}$
of the respective maps $f : X \ra P$, $\Psi : Y \ra P$, $ \Sigma \ra P$.

Since $ X \ra Y$ is quasi-\'etale (unramified in codimension $1$), $\sR_f$ is the inverse image of $\sR$,
hence  $\sR_f$  maps to the branch locus $\sB$ with mapping degree $2p$.

Since the branch locus is known to be irreducible, and reduced, and $ \sR_{\Sigma}$ is non empty,
it follows that $ X \ra \Sigma$ is quasi-\'etale, and  $\sR_f$ is the inverse image of $ \sR_{\Sigma}$,
so that $\sR_f$  maps to the branch locus $\sB$ with mapping degree $ ( N \frac{2p}{m}) \de$,
where $\de$ is the mapping degree of $ \sR_{\Sigma}$ to $\sB$, which is at most $p$.

From the equality $ 2p = \de  N (\frac{2p}{m})$ and since $N = g! / 2$, we see that $g!$ divides $4p$,
which is absurd for $g \geq 4$. For $g=3$ it follows that $p=3$, and either $\de=2$, $ m = 6$,
or $\de = 1$, and $m=3$.

To show that  these special cases cannot occur, we can use several arguments.

For the case $m=3$, then $\Sigma$ is a nondegenerate surface of degree $3$ in $\PP^4$
(hence it is by the way the Segre variety $\PP^2 \times \PP^1$), and has a linear projection to $\PP^2$
of degree $3$: hence its branch locus is a curve of degree $6$, contradicting that $\sB$ has degree $12$
(see \cite{c-s}, $\sB$ is the dual curve of the plane quartic curve whose Jacobian is $A'$).

 For the case $m=6$, we consider the normalization of the fibre product $$ Z: = \Sigma \times_P Y,$$
so that there is a morphism of $X$ into $Z$, which we claim to be birational. In fact, the degree of
the map $Z \ra P$ is $18$, and each component $Z_i$ of $Z$  is a covering of  $\Sigma$; therefore, if  
$Z$ is not irreducible, there is a component $Z_i$ mapping  to $P$ with degree $6$: and the conclusion is that
$\Sigma \ra P$ factors through $Y$, which is what we wanted to happen, but assumed not to happen. 

If $Z$ is irreducible, then  $X$ is birational to $Z$, hence  the group $G$ acts on $Z$, and trivially on $Y$: 
follows that  the fibres of  $ X \ra \Sigma$
are $G$-orbits, whence $Y = \Sigma$, again a contradiction.

\begin{rem}
Indeed, we know (\cite{c-s}) that the monodromy group of $\Theta \ra P$ is $\mathfrak S_4$, 
and  one  could  describe explicitly $\Ga$  in relation to the series of inclusions 
$$ 1 \ra K \ra K_1 \ra H \ra \Ga,$$
where $K_1$ is the subgroup associated to $\Theta$.

At any rate, if $m=3$, we can take the normalization of the fibre product $W : = \Theta \times_P \Sigma$, and argue as before that 
if $W$ is reducible, then $\Theta$ dominates $\Sigma$, which is only possible if $\Sigma =Y$.

While, if $W$ is irreducible, $W = X$ and the fibres of $ X \ra \Sigma$ are made of $\ZZ/3$-orbits, hence again 
$\Theta$ dominates $\Sigma$.

\end{rem}

\end{proof}

Excluded cases (b) and (c), we are left with case (a), where  
$$H''  \subset  \mathfrak S_{N-1} \Rightarrow \widehat{H} \subset H,$$
equivalently $\Sigma$ lies between $X$ and $Y$, hence it corresponds to an intermediate subgroup
of $G = D_p$, either $\ZZ/p$ and then $\Sigma= \Theta$, or $\ZZ/2$, and then $\Sigma = X / \pm 1$
for a proper choice of the origin in the Abelian variety $A$.

\subsection{In the case where $\Sigma$ lies between $X$ and $Y$, for a general deformation of $X$
 the canonical map becomes birational.}
Here, we can soon dispense of the case $\Sigma= \Theta$: just using exactly the same argument 
we gave in lemma \ref{induction}, that  $\ZZ/p$ does not act on $H^0(X, \hol_X(K_X))$ as the identity,
so the fibres of $X \ra \Theta$ are separated.

For the case where $\Sigma = X / (\ZZ/2)$ we need to look at the vector space $H^0(X, \hol_X(K_X))$,
which is  a $p$-dimensional representation of $D_p$, where we have a basis $\theta_1, \dots, \theta_p$
of eigenvectors for the different characters of $\ZZ/p$. In other words, for a generator $r$ of $\ZZ/p$,
we must have
$$ r (\theta_i ) = \zeta^i \theta_i.$$
If $s$ is an element of order $2$ in the dihedral group, then $ r \circ s = s \circ r^{-1}$, hence
$$ r (s (\theta_i)) = s (r^{-1} (\theta_i)) = s ( \zeta^{-1}  \theta_i) =  \zeta^{-1} s (   \theta_i),$$
hence we may assume without loss of generality that 
$$ s (\theta_i) = \theta_{-i}, \ -i \in \ZZ/p.$$

It is then clear that $s$ does not act as the identity unless we are in the special case $p=2$.

 In the special case $p=2$ , we proceed as in \cite{c-s}. We have a basis $\theta_1, \theta_2$ of even functions,
i.e., such that $\theta_i (-z) = \theta_i(z)$, and $\Theta = X / (\ZZ/2)^2$, where $(\ZZ/2)^2$
acts sending $ z \mapsto \pm z + \eta$, where $\eta$ is a $2$-torsion point on $A$.
 Since the partial derivatives of $\theta_1$ are invariant for $ z \mapsto  z + \eta$,
and since  $\theta_2 (z + \eta) = - \theta_2 (z)$,  the canonical map  $\Phi_X$ factors through the involution $\iota : X \ra X$
such that
$$ \iota (z) =  -z + \eta.$$

If for a general deformation of $X$ as a symmetric divisor the canonical map would factor through $\iota$,
then $X$ would be $\iota$-invariant; being symmetric, it would be $(\ZZ/2)^2$-invariant, hence for all deformations
$X$ would remain the pull-back of a Theta divisor. This is a contradiction, since the Kuranishi family 
of $X$ has  higher dimension than the Kuranishi family  of a Theta divisor $\Theta$ (see \cite{c-s}).

\section{Embedding obstruction}  

\begin{thm}\label{divisorsonAV}
Let $X$ be an ample smooth  divisor in an Abelian variety $A$ of dimension $n+1$. 

Assume moreover that $X$ is not a hyperelliptic curve of genus $3$ yielding  a polarization of type $(1,2)$.

If $X$ defines a polarization of type
$(m_1, \dots, m_{n+1})$, then the canonical  map $\Phi_X$ of $X$ is a morphism, and it can be an embedding only
if $p_g(X) : = h^0(K_X)  \geq 2n+2$, which means that the Pfaffian $d : = m_1 \cdot m_2  \dots \cdot  m_{n+1}$ satisfies the
inequality $$ d \geq n + 2 = g + 1.$$
\end{thm}

\begin{proof}
Assume the contrary, $ d \leq n+1$, so that $X$ embeds in $\PP^{n + c}$, with codimension $ c \leq n$, $ c = d -1$.

Observe that the pull back of the hyperplane class of $\PP^{n+c}$ is the divisor class of $X$ restricted to $X$,
and that the degree $m$ of $X$ equals to the maximal self-intersection of $X$ in $A$, namely $m = X^{n+1} = d (n+1)!$. 

The Severi double point formula yields see (\cite{FL77}, also \cite{cat-og})
$$ m^2 =  c_n ( \Phi^* T_{\PP^{2n}} - T_X),$$ 
where $\Phi$ is the composition of $\Phi_X$ with a linear embedding $\PP^{n+c} \hookrightarrow \PP^{2n}$.

By virtue of the exact sequence $$ 0 \ra T_X \ra T_A | X \ra \hol_X(X) \ra 0, $$
we obtain
$$ m^2 = [(1 + X)^{2n+2} ]_n =  {2n+2 \choose n} X^{n+1} \Leftrightarrow d (n+1)! = m = {2n+2 \choose n} .$$

To have a quick proof, let us also apply the double point formula to the section of $X = \Phi (X)$ with a linear subspace of
codimension $(n-d+1)$, which is a variety $Y$ of dimension and codimension $(d-1)$ inside $\PP^{2d-2}$.

In view of the exact sequence 
$$ 0 \ra T_Y \ra T_A | Y \ra \hol_Y(X)^{n-d+2} \ra 0, $$

we obtain 
$$ m^2 = [(1+X)^{n + d +1} X^{n-d+1} ]_n = {n + d + 1 \choose d-1} X^{n+1}  $$
equivalently, 
$$ d (n+1)! = m = {n + d + 1 \choose d-1} .$$

Since, for $ d \leq n+1$,  $  {n + d + 1 \choose d-1}  \leq  {2n+2 \choose n}$, equality holding if and only if $ d= n+1$,
we  should have $ d= n+1$ and moreover 
$$ (n+1) (n+1)! = {2n+2 \choose n} \Leftrightarrow (n+2)! = {2n+2 \choose n+1} .$$ 

We have equality for $n=1$, but then when we pass from $n$ to $n+1$  the left hand side gets multiplied by $(n+3)$, the right hand side
by $\frac{(2n+4)(2n+3) }{ (n+2)^2}$, which is a strictly smaller number since 
$$(n+3)(n+2) = n^2 + 5n + 6 > 2(2n+3) = 4n + 6.$$ 

We   are done with showing the desired assertion since  we must have $n=1$, and $d=2$, and  in the case $n=1$, $d=2$  we  have a curve in $\PP^2$ of degree $4$ and genus $3$.

\end{proof}

\section{Remarks on the conjecture}

Recall Conjecture \ref{main}:

\begin{conj}
Assume that $(A,X)$ is a general pair of  a smooth  ample hypersurface $X\subset A$  in an Abelian variety $A$, giving a polarization with Pfaffian $d$
satisfying $ d \geq g + 1.$

Then  the canonical map $\Phi_X$ is an embedding of $X$.

\end{conj}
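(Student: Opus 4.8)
The plan is to upgrade the birationality of Theorem \ref{genbirat} to an embedding by exploiting the openness of the embedding property: since being a closed immersion is an open condition in the proper flat family of all pairs of a fixed polarization type, it suffices to produce, for each type $\overline d$ with $d \geq g+1$, a single pair $(A,X)$ whose canonical map $\Phi_X$ is a closed immersion, and the general pair then inherits the property. Two things must be checked at every point: injectivity on points and injectivity of the differential; a proper morphism from the smooth variety $X$ is a closed embedding precisely when both hold. It is worth noting at the outset that for $g=2$ the conjecture is already contained in Theorem \ref{genbirat}, since there $X$ is a curve of genus $\geq 4$ and a curve whose canonical map is birational onto its image is automatically non-hyperelliptic, hence canonically embedded. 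The difficulty is thus intrinsic to $\dim X \geq 2$.

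First I would dispose of the types with $d_1 \geq 2$. If $d_1 \geq 3$, Lefschetz's theorem embeds $A$ by $|X|$ into $\PP^{d-1}$; restricting this embedding to $X = \{\theta_1=0\}$ shows that the coordinates $(\theta_2,\dots,\theta_d)$ of Lemma \ref{canmap} already embed $X$, and adjoining the Gauss coordinates $\partial\theta_1/\partial z_1,\dots,\partial\theta_1/\partial z_g$ cannot destroy an embedding, so $\Phi_X$ is a closed immersion. For $d_1=2$ the same argument applies through Ohbuchi's refinement, which embeds $A$ outside an explicit special locus, on which one checks by hand that the Gauss part restores the missing separation. All the genuine content therefore lies in the principally-directed types, those with $d_1=1$, where $|X|$ is not even base-point-free on $A$ and the embedding of $X$ must be manufactured from the Gauss map together with only $d-1$ auxiliary sections.

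For these types I would return to the covering picture of Section 2. By Debarre's theorem, for the model type $(1,\dots,1,p)$ the Gauss map factors as the generic covering $\Psi : Y = \Theta/\pm 1 \ra P = \PP^{g-1}$ of monodromy $\mathfrak S_N$, $N = g!/2$, precomposed with the $D_p$-Galois quotient $X \ra Y$, so that $f : X \ra P$ has degree $2pN$. Injectivity of $\Phi_X$ then splits into separating points in distinct fibres of $f$, which is exactly the birationality of Theorem \ref{genbirat}, and separating the $2pN$ points inside a general fibre, whose Gauss images coincide; for the latter one studies the $\ZZ/p$-eigendecomposition $H^0(X,\hol_X(K_X)) = \oplus_i H^0(\Theta,\hol_\Theta(K_\Theta + i\eta))$ and argues that, after dividing by the common scalar contributed by the partials, the auxiliary sections $\theta_2,\dots,\theta_d$ take distinct projective values on a general fibre. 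A structural warning is that, unlike the birationality statement, this conjecture does \emph{not} descend along the divisibility induction of Lemma \ref{induction}, because the threshold $d \geq g+1$ is not stable under passing to a divisor of the polarization; each principally-directed type must accordingly be handled essentially on its own.

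The step I expect to be the real obstacle is the vanishing of the double-point cycle, and most concretely the immersion property, i.e.\ injectivity of $d\Phi_X$ everywhere. The Severi formula behind Theorem \ref{obstruction} bounds the total double-point class, which packages failures of injectivity and of immersion together, and is purely obstructive: it gives no positive information about tangent directions, while the monodromy machinery delivering birationality is silent about ramification. Concretely, $f$ fails to immerse exactly along the pull-back of the ramification divisor $\sR$, where the second fundamental form of $X$ degenerates, and precisely there the Gauss coordinates become tangentially deficient; one must prove that at each such point some $\theta_i$ has differential transverse to $df$, equivalently that the $2$-jet evaluation $H^0(X,\hol_X(K_X)) \ra \hol_X(K_X)\otimes\hol_X/\mathfrak m_x^2$ is surjective for every $x\in X$. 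Since $d=g+1$ is the exact equality case of the Severi estimate, this separation carries no numerical slack, and I expect that establishing it will require either a delicate analysis of the theta-functional $2$-jets along $\sR$ or the construction of a single explicit embedded model in each type $(1,\dots,1,d)$ with $d\geq g+1$ to feed the opening openness reduction.
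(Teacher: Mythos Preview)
The statement you are addressing is \emph{Conjecture} \ref{main}: the paper does not prove it, and explicitly formulates it as an open problem. Section 4 contains only partial remarks --- the case $g=2$ (as you note), a divisibility lemma showing that the conjecture for $\overline{\de}$ implies it for any $\overline{d}$ divisible by $\overline{\de}$ provided $|X'|$ is base-point-free on $A'$, and references to the cases $(1,2,2)$ (Cesarano) and $(1,\dots,1,d)$ with $d>2^g$ (Debarre--Hulek--Spandaw) as evidence. No argument for general type with $d\geq g+1$ is offered.

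Your strategy is reasonable as an outline, and the easy pieces are correct: openness of the closed-immersion locus in the Kuranishi family reduces everything to exhibiting one embedded pair per type, and for $d_1\geq 3$ Lefschetz already embeds $A$ by $|X|$, so the subsystem $(\theta_2,\dots,\theta_d)$ embeds $X$ and $\Phi_X$ is a fortiori an embedding. For $d_1=2$ Ohbuchi's result still leaves a small special locus to handle, but this is plausible.

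The genuine gap is in your treatment of the principally-directed types $d_1=1$. You invoke ``the covering picture of Section 2'', but that picture is available only for the \emph{special} hypersurfaces $X$ arising as pull-backs of a Theta divisor under a $\ZZ/p$-isogeny, and for those the canonical map is typically \emph{not} an embedding: the paper shows (subsection 2.7) that when $p=2$ the map $\Phi_X$ factors through the involution $\iota(z)=-z+\eta$, hence is $2:1$. Thus these special $X$ cannot serve as the single embedded model feeding your openness argument, and the monodromy/eigenspace analysis you sketch, which is tied to that special $X$, says nothing about a general pair. You recognize this structurally when you observe that the threshold $d\geq g+1$ is not stable under the divisibility induction; the paper's Section 4 lemma partially salvages the induction under a base-point-freeness hypothesis, but that hypothesis is exactly what fails for the principally-directed starting types. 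Your closing paragraph is an honest statement of the difficulty rather than a resolution: controlling the $2$-jet map along the ramification of the Gauss map, or constructing an explicit embedded model for each type $(1,\dots,1,d)$ with $g+1\leq d\leq 2^g$, is precisely the content of the conjecture that remains open.
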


The first observation is that we can assume $g \geq 3$, since for a curve the canonical map is an embedding if and only if it is birational onto its image,
 hence we may apply here our main Theorem \ref{genbirat}. 

The second remark is that we have a partial  result which is similar to lemma \ref{induction}

\begin{lemma}
Assume that the polarization type $\overline{d} $ is divisible by
$\overline{\de} $. Then, if the embedding Conjecture \ref{main} holds true for $\overline{\de}$, and the linear system $|X'|$ is base point free
for the general element $X'$ yielding a polarization of type $\overline{\de}$, then the embedding conjecture also holds
true  for $\overline{d}$.
\end{lemma}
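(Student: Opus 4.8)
The plan is to mimic the argument of Lemma \ref{induction}, replacing the word ``birational'' throughout by ``an embedding'', and to identify which additional input is needed to make each step go through. As before, by the inductive reduction we may assume that $\overline{d} = \overline{\de}$ except in a single coordinate $j$, where $d_j = p\,\de_j$ with $p$ prime; so we have an \'etale $\ZZ/p$-quotient $\pi : X \to X'$, with $X'$ a general hypersurface of polarization type $\overline{\de}$, for which $\Phi_{X'}$ is an embedding by hypothesis. Since $K_X = \pi^* K_{X'}$, we get the eigenspace decomposition
$$ H^0(X, \hol_X(K_X)) = \bigoplus_{i=1}^{p} H^0(X', \hol_{X'}(K_{X'} + i\eta)), $$
with $\eta$ a nontrivial $p$-torsion class, the summand $i=p$ being $H^0(X', \hol_{X'}(K_{X'}))$ itself.

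First I would check that $\Phi_X$ is \emph{injective}. Two points of $X$ in the same fibre of $\pi$ are separated exactly when some eigenspace with $i \neq p$ is nonzero; this is guaranteed by the dimension count at the end of Lemma \ref{induction} (the Pfaffian of $X$ is $p$ times that of $X'$, so $h^0(K_X) > h^0(K_{X'})$, forcing a nontrivial eigenspace for some $i\neq p$), hence the $\ZZ/p$-action on $H^0(X,\hol_X(K_X))$ is nontrivial and the fibres of $\pi$ are separated. Two points in distinct fibres map to distinct points of $X'$ under $\pi$, which $\Phi_{X'}$ separates, and since $\Phi_{X'}\circ\pi$ factors through $\Phi_X$, their images under $\Phi_X$ are distinct too. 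Next I would check that $\Phi_X$ is an \emph{immersion}, i.e. injective on tangent vectors. At a point $x$ not fixed by the deck group (all points, since $\pi$ is \'etale), $\pi$ is a local isomorphism; the composite $\Phi_{X'}\circ\pi$ is an immersion at $x$ because $\Phi_{X'}$ is an embedding, and since it factors as (linear map)$\circ\,\Phi_X$, already $\Phi_X$ must be an immersion at $x$. This is where the base-point-freeness hypothesis on $|X'|$ enters: it ensures, via Lemma \ref{canmap}, that $|K_{X'}|$ and hence $|K_X|$ is base-point free, so that $\Phi_X$ and $\Phi_{X'}$ are genuine morphisms everywhere and the factorization argument is valid at every point rather than merely generically.

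The one point that requires a little more care, and which I expect to be the main obstacle, is the passage from ``$\Phi_X$ is an injective immersion'' to ``$\Phi_X$ is a closed embedding'': for this one wants $X$ compact, which it is (it is a smooth projective variety), so an injective immersion from a compact manifold is automatically a closed embedding. Thus the genuine content is concentrated in the two separation statements above, and in verifying that base-point-freeness of $|X'|$ propagates to $|X|$ --- the latter because $\pi^*$ of a base-point-free system is base-point free, $\pi$ being surjective. I would then remark that, combined with the fact (from Lefschetz/Ohbuchi, or directly) that $|X'|$ is base-point free as soon as the first entry $\de_1 \geq 2$, the lemma reduces Conjecture \ref{main} for polarization types with $\de_1\geq 2$ to the finitely many ``primitive'' types, exactly as Lemma \ref{induction} does for Theorem \ref{genbirat}.
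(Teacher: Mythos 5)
Your reduction, your treatment of points in distinct fibres, and your immersion argument all match the paper. But there is a genuine gap in the step where you separate points \emph{within} a fibre of $\pi$, and it is exactly the step for which the base-point-freeness hypothesis is actually needed. You claim that two points of $X$ in the same fibre are separated ``exactly when some eigenspace with $i\neq p$ is nonzero,'' and you deduce this from the dimension count of Lemma \ref{induction}. That count only shows that the $\ZZ/p$-action on $H^0(X,\hol_X(K_X))$ is nontrivial, which separates the \emph{general} fibre (this was enough for birationality in Lemma \ref{induction}, but not for an embedding). Concretely, for $x$ lying over $q\in X'$, the points $x$ and $gx$ have the same image under $\Phi_X$ precisely when all sections from all but one eigenspace vanish at $x$: the ratio $(x_0:x_1)$ built from sections in eigenspaces with eigenvalues $\zeta^i\neq\zeta^j$ only distinguishes $x$ from $gx$ if \emph{both} coordinates are nonzero at $x$. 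So a fibre over a point $q$ which is a base point of every eigenspace but one is \emph{not} separated, even though some eigenspace with $i\neq p$ is nonzero.

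This is where the paper uses the hypothesis on $|X'|$: by adjunction each summand $H^0(X',\hol_{X'}(K_{X'}+i\eta))$ contains the restriction to $X'$ of the linear system $|X'+i\eta|$ on $A'$, which is a translate of $|X'|$ and hence base point free; therefore \emph{every} point of $X'$ fails to be a base point of at least two (indeed of all) eigenspaces, and every fibre of $\pi$ is separated. By contrast, the role you assign to the base-point-freeness hypothesis --- guaranteeing that $\Phi_X$ and $\Phi_{X'}$ are morphisms --- is not where it is needed: the canonical system of a smooth ample hypersurface in an abelian variety is always base point free by Ran's theorem, as recalled in the introduction of the paper. Once you relocate the hypothesis to the in-fibre separation step, the rest of your argument (injectivity across distinct fibres, immersion via the linear projection onto $\Phi_{X'}\circ\pi$, and compactness upgrading an injective immersion to a closed embedding) goes through as written.
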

\begin{proof}
As in lemma \ref{induction} we reduce to the following situation:
 we have $\pi : X \ra X'$, which is an \'etale quotient with group $\ZZ/p$.  Moreover, since the  canonical divisor $K_X$
is the pull-back
of $K_{X'}$, the composition $\Phi_{X'} \circ \pi$ factors through $\Phi_X$, and a morphism $f : \Sigma \ra \Sigma'$.

Since by assumption  $\Phi_{X'}$ is an embedding, $\Phi_X$ is a local embedding at each point, and 
 it suffices to show that  $\Phi_X$ separates all the fibres.

 Recalling that
$$H^0 (X, \hol_X (K_X)) = \oplus_1^p H^0 (X', \hol_{X'} (K_{X'} + i \eta)),$$
(here $\eta$ is a nontrivial divisor of $p$-torsion), 
 this follows immediately if we know that for each point $p \in X'$ there are two distinct  eigenspaces which do not have $p$ as a base-point.
 
 Under our strong assumption $|K_{X'} + i \eta |$ contains the restriction of $|X' + i \eta |$ to $X'$,
 but $|X' + i \eta |$    is a translate of $|X'|$, so it is base-point free.
  
\end{proof}

Already in the case of surfaces ($n=2, g=3$) the result is not fully established, we want $ d \geq 4$, and the case of
a polarization of type $(1,1,4)$ is not yet written down (\cite{cesarano} treats the case of
a polarization of type $(1,2,2)$, which is quite interesting for the theory of canonical surfaces in $\PP^5$).

 Were our conjecture too optimistic, then the question would arise about  the exact  range of validity for the statement
of embedding of a general pair $(X,A)$.

\medskip

 {\bf Acknowledgements:} the first author would like to  thank Edoardo Sernesi and Michael L\"onne  for  interesting conversations.
 
 Thanks to the second referee for useful suggestions on how to improve the exposition.


\end{document}